\documentclass[a4paper, reqno]{amsart}

\usepackage{anysize}


\usepackage{amssymb,latexsym}
\usepackage{amsmath,amsthm}
\usepackage{amsfonts,mathrsfs}
\usepackage{mathtools}


\usepackage{halloweenmath}

\usepackage{tikz-cd}

\usepackage{todonotes}

\usepackage[all]{xy}
\usepackage{graphicx}

\usepackage{xcolor,url}
\usepackage{hyperref}
\hypersetup{
    colorlinks,
    citecolor=blue,
    filecolor=blue,
    linkcolor=black,
    urlcolor=blue
}

\usepackage{enumitem}

\pagestyle{plain}



\usepackage{listings}
\usepackage{color}

\definecolor{dkgreen}{rgb}{0,0.6,0}
\definecolor{gray}{rgb}{0.5,0.5,0.5}
\definecolor{mauve}{rgb}{0.58,0,0.82}

\lstset{frame=tb,
  language=Java,
  aboveskip=3mm,
  belowskip=3mm,
  showstringspaces=false,
  columns=flexible,
  basicstyle={\small\ttfamily},
  numbers=none,
  numberstyle=\tiny\color{gray},
  keywordstyle=\color{blue},
  commentstyle=\color{dkgreen},
  stringstyle=\color{mauve},
  breaklines=true,
  breakatwhitespace=true,
  tabsize=3
}

\usepackage{forest}

\usepackage{tabularray}
\UseTblrLibrary{diagbox}


\theoremstyle{plain}
\newtheorem{theorem}{Theorem}[section]

\newtheorem{corollary}[theorem]{Corollary}
\newtheorem{lemma}[theorem]{Lemma}

\newtheorem{fact}[theorem]{Fact}
\newtheorem{conjecture}[theorem]{Conjecture}
\newtheorem*{theorem*}{Theorem}
\newtheorem*{corollary*}{Corollary}
\newtheorem*{conjecture*}{Conjecture}

\theoremstyle{definition}
\newtheorem{definition}[theorem]{Definition}

\theoremstyle{remark}
\newtheorem{remark}[theorem]{Remark}
\newtheorem{claim}{Claim}




\newcommand\N{\mathbb{N}}

\newcommand\F{\mathbb{F}}


\def\cQ{\mathcal{Q}}

\def\cF{\mathcal{F}}


\newcommand\fS{\mathfrak{S}}


\newcommand\LL{\mathscr{L}}


\newcommand{\End}{\mathrm{End}}

\newcommand{\Id}{\mathrm{Id}}

\newcommand{\Asym}{\mathrm{Asym}}

\def\seq{\subseteq}

\newcommand{\set}[1]{\{ {#1} \}}
\newcommand{\vect}[1]{\langle {#1} \rangle}
\newcommand{\abs}[1]{\lvert {#1} \rvert}

\DeclareMathOperator{\Aut}{Aut}

\DeclareMathOperator{\ad}{ad}
\DeclareMathOperator{\Span}{Span}


\definecolor{airforceblue}{rgb}{0.36, 0.54, 0.66}


\def\Ind{\setbox0=\hbox{$x$}\kern\wd0\hbox to 0pt{\hss$\mid$\hss}
\lower.9\ht0\hbox to 0pt{\hss$\smile$\hss}\kern\wd0}
\def\Notind{\setbox0=\hbox{$x$}\kern\wd0\hbox to 0pt{\mathchardef
\nn=12854\hss$\nn$\kern1.4\wd0\hss}\hbox to
0pt{\hss$\mid$\hss}\lower.9\ht0 \hbox to 0pt{\hss$\smile$\hss}\kern\wd0}

\def\indi#1{\mathop{\ \ \hbox to 0ex{\hss$\vert^{\hbox to 0ex{$\scriptstyle#1$\hss}}$\hss}
\lower1ex\hbox to 0ex{\hss$\smile$\hss}\ \ }}

\def\nindi#1{\mathop{\ \ \hbox to 0ex{\hss$\!\not{\vert}^{\hbox to 0ex{$\scriptstyle\,#1$\hss}}$\hss}
\lower1ex\hbox to 0ex{\hss$\smile$\hss}\ \ }}



\begin{document}

\title{Wilson conjecture for omega-categorical Lie algebras, the case $4$-Engel characteristic $3$}

\author[C. d'Elb\'{e}e]{Christian d\textquoteright Elb\'ee$^\dagger$}
\address{School of Mathematics, University of Leeds\\
Office 10.17f LS2 9JT, Leeds}
\email{C.M.B.J.dElbee@leeds.ac.uk}
\urladdr{\href{http://choum.net/\textasciitilde chris/page\textunderscore perso/}{http://choum.net/\textasciitilde chris/page\textunderscore perso/}}

\thanks{
\begin{minipage}{0.8\textwidth}
 The author is fully supported by the UKRI Horizon Europe Guarantee Scheme, grant no EP/Y027833/1.
\end{minipage}%
\begin{minipage}{0.2\textwidth}
\begin{center}
    \includegraphics[scale=.04]{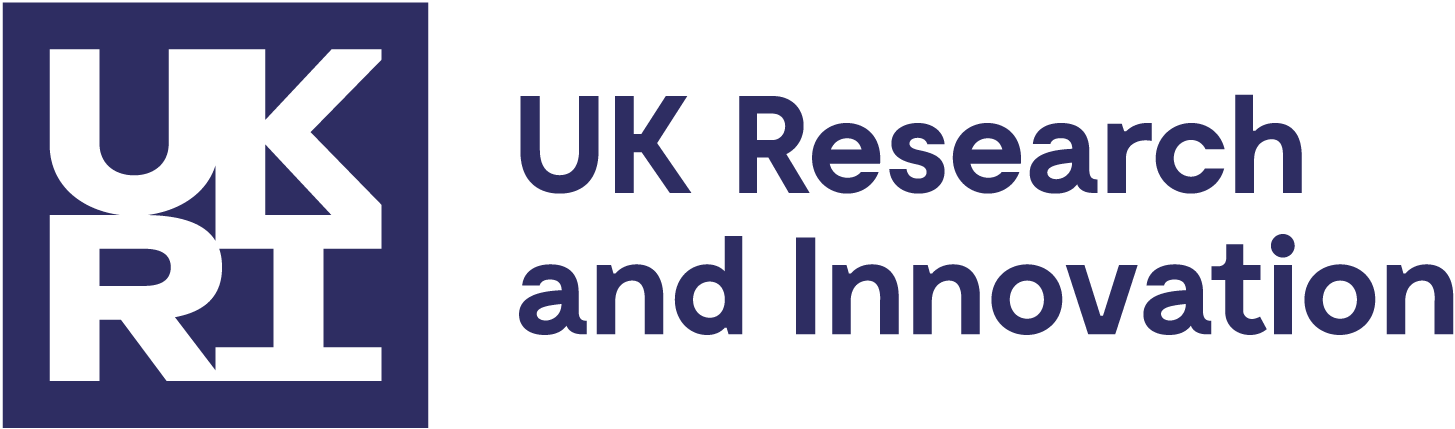}
\end{center}
\end{minipage}
}

\date{\today}

\begin{abstract}
We continue our study of the Wilson conjecture for $\omega$-categorical Lie algebras and prove that $\omega$-categorical $4$-Engel Lie algebras of characteristic $3$ are nilpotent. We develop a set of tools to adapt in the definable context some classical methods for studying Engel Lie algebras (Higgins, Kostrikin, Zelmanov, Vaughan-Lee, Traustason and others). We solve the case at hand by starting a systematic study of Lie algebras for which there is a $k$ such that the principal ideal generated by any element is nilpotent of class $<k$ (which we call \textit{$k$-strong Lie algebras}). We use computer algebra to check basic cases of a conjectural arithmetical property of those, namely that $x^{k-1}y^{k-1} = (-1)^{k-1}y^{k-1}x^{k-1}$ is an identity for Lie elements of the enveloping algebra. The solution is given by reducing the problem to $k$-strong Lie algebras generated by particularly well behaved \textit{sandwiches} in the sense of Kostrikin.
\end{abstract}

\maketitle


\section{Introduction} 

We continue our study \cite{delbee2024engel3} of the Wilson conjecture for $\omega$-categorical Lie algebras, which is the following statement.
\begin{center}
\textit{Every $\omega$-categorical $n$-Engel Lie algebra over $\F_p$ is nilpotent.}
\end{center}
Recall that an $\omega$-categorical structure $M$ (group, Lie algebra, associative algebra) is one which has a unique countable model of its first-order theory, up to isomorphism. Equivalently, for each $n\in \N$, there are only finitely many orbits in the componentwise action of $\Aut(M)$ on the cartesian power $M^n$. A Lie algebra is $n$-Engel if it satisfies the identity 
\[0 = [\ldots [x,\underbrace{y],y],\ldots,y]}_{n \text{ times}} =: [x,y^n].
\]

The above statement is the Lie algebra analog of the original conjecture of Wilson \cite{wilson1981}:

\begin{conjecture*}[Wilson, 1981]\label{conj:wilson}
    Every locally nilpotent $\omega$-categorical $p$-group is nilpotent.
\end{conjecture*}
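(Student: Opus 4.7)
The plan is to exploit the interaction between $\omega$-categoricity (which forces strong uniformity) and the Engel-like conditions implied by local nilpotency in a $p$-group $G$. First, I would show that $\omega$-categoricity already constrains the arithmetic of $G$ drastically: by counting orbits of $\Aut(G)$ on $G$, only finitely many orders occur among elements, so $G$ has bounded exponent $p^e$. This puts $G$ within the scope of Zelmanov's positive solution to the restricted Burnside problem, so every $d$-generated subgroup is nilpotent of class at most a uniform bound $c(d,p,e)$. Thus $G$ is what one might call \emph{locally nilpotent of bounded class}; the target is to promote this to genuine nilpotency of $G$ itself.

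Second, I would attempt to reduce the group problem to the Lie algebra problem, which is the main subject of the paper. The idea is to associate to $G$ its graded Lie ring $L(G) = \bigoplus_{i\geq 1} \gamma_i(G)/\gamma_{i+1}(G)$ (or a Lazard-correspondence analogue if $p$ is large relative to the exponent), viewed as a Lie algebra over $\F_p$. Then $L(G)$ inherits $\omega$-categoricity from $G$ via the interpretation, and local nilpotency of $G$ translates into an Engel-type identity on $L(G)$: more precisely, if every $2$-generated subgroup of $G$ has class at most $k$, then $L(G)$ satisfies $[x,y^k]=0$, i.e.\ is $k$-Engel. At this point one applies the Wilson conjecture for $\omega$-categorical Engel Lie algebras (which is the paper's setting) to conclude that $L(G)$ is nilpotent. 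The final step would be to transfer nilpotency of $L(G)$ back to $G$, which is standard for finite groups and requires care in the infinite $\omega$-categorical setting but is reasonable given the strong homogeneity.

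The main obstacle, unsurprisingly, is the Lie algebra step itself: the Wilson conjecture for $\omega$-categorical $n$-Engel Lie algebras over $\F_p$ is exactly the open problem the paper addresses. Even granting the group-to-Lie-algebra reduction, one is left needing to prove nilpotency of $\omega$-categorical $n$-Engel Lie algebras over $\F_p$ for all pairs $(n,p)$, and this is only known in low cases. A secondary obstacle is that the correspondence between $G$ and $L(G)$ loses information unless one can control the nilpotency class uniformly across subquotients, so the reduction really needs $k$ to be bounded in terms of $p$ and $e$ alone (which Zelmanov's theorem provides for finite pieces but must be exported to the $\omega$-categorical setting). My expectation is therefore that a complete proof of the Wilson conjecture for $p$-groups factors through a complete solution of its Lie algebra analogue, and the present paper's case-by-case strategy for small $(n,p)$ — here $(4,3)$, building on the earlier $(3,p)$ analysis and on the sandwich/$k$-strong techniques of Kostrikin and Zelmanov — is in effect the correct path of attack.
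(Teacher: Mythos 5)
The statement you are addressing is not a theorem of the paper: it is Wilson's 1981 conjecture, which the paper explicitly records as \emph{open} (``the Wilson conjecture is wide open for groups and Lie algebras''). The paper proves only a very special case of the \emph{Lie algebra analogue} (namely $\omega$-categorical $4$-Engel Lie algebras over $\F_3$), so there is no proof to compare yours against, and your proposal---which you candidly frame as a plan rather than a proof---cannot be correct as a proof of the statement. Your opening observations are sound: finitely many $\Aut(G)$-orbits on $G$ force bounded exponent, and then local nilpotency plus the restricted Burnside theorem gives a uniform bound on the class of $d$-generated subgroups, so $G$ is $k$-Engel for some $k$. This much is standard and is indeed part of why the Lie algebra analogue is the natural companion problem.

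However, the reduction you sketch from the group conjecture to the Lie algebra conjecture is itself not established, and it has two concrete gaps. First, the interpretation of $L(G)=\bigoplus_{i\geq 1}\gamma_i(G)/\gamma_{i+1}(G)$ inside $G$: when $G$ is not nilpotent the lower central series does not terminate, so $L(G)$ has infinitely many graded components and is not a single first-order interpretable structure in $G$; one cannot simply ``inherit'' $\omega$-categoricity for the whole graded Lie ring (only for each finite truncation $G/\gamma_{c}(G)$, which begs the question). Second, and more seriously, the transfer back fails: nilpotency of $L(G)$ of class $c$ only says $\gamma_{c+1}(G)=\gamma_{c+2}(G)=\cdots$, i.e.\ the lower central series \emph{stabilizes}, not that it reaches $1$. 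Killing the residual term $\bigcap_i\gamma_i(G)$ is exactly where one needs an argument in the group itself---this is why Wilson's own contribution (reproduced for Lie algebras as Corollary \ref{cor:wilsontheorem}) is the equivalence of solvability and nilpotency for $\omega$-categorical locally nilpotent groups, via Cherlin's nilring theorem, rather than a passage through $L(G)$. So the group and Lie statements are analogues, each attacked with parallel tools (characteristically simple sections, Wilson's solvable-implies-nilpotent step, interpretation of commutative associative nil algebras), but neither is known to reduce to the other. Your ``expectation'' in the final paragraph is a reasonable heuristic, not a reduction one can currently carry out.
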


If solved, the Wilson conjecture would yield a quite satisfactory decomposition of $\omega$-categorical groups.  For a recent introduction and a general overview of the Wilson conjecture and its analogs, as well as the solution for $3$-Engel Lie algebras over $\F_5$, see \cite{delbee2024engel3}. Whereas the Wilson conjecture is wide open for groups and Lie algebras, the analogous question for associative algebra was solved by Cherlin \cite{cherlinnilringI,cherlinnilringII}, answering a question asked earlier by Baldwin and Rose \cite{baldwinrose1977}.

\begin{fact}[Cherlin, 1980]\label{fact:cherlinomegacatnilringarenilpotent}
    Any $\omega$-categorical nilring is nilpotent.
\end{fact}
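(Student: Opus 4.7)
The plan is to leverage the strong uniform finiteness imposed by $\omega$-categoricity and combine it with the classical structure theory of nil associative rings of bounded index. First I would show that the nil exponent is uniformly bounded on $R$: since $R$ is $\omega$-categorical there are only finitely many $1$-types over $\emptyset$, and whether $x^n = 0$ is a first-order condition on $x$, so the minimal nilpotency index of $x$ depends only on its type. Taking the maximum over the (finitely many) types yields an $N$ with $x^N = 0$ for every $x \in R$.

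Second, I would reduce to the case of a nil algebra over a prime field. The additive group $(R,+)$ is itself $\omega$-categorical and hence a direct sum of finitely many groups of bounded prime-power exponent; decomposing into primary components and filtering by the chain $p^iR$ reduces the problem, one graded piece at a time, to the case where $pR = 0$ and $R$ is an $\F_p$-algebra, each piece remaining $\omega$-categorical and of bounded nil index. In the regime $p = 0$ or $p > N$ the Nagata–Higman theorem applies directly and furnishes nilpotency of class at most $2^N - 1$.

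The main obstacle is the complementary range $p \leq N$, where Nagata–Higman fails and one needs genuinely model-theoretic input. Here I would combine three ingredients. First, that bounded nil index makes $R$ satisfy a polynomial identity, bringing PI techniques into play. Second, that $\Aut(R)$ has finitely many orbits on each $R^k$, giving a descending chain condition on the lattice of $\Aut(R)$-invariant ideals and forcing the descending chain of two-sided ideals $R \supseteq R^2 \supseteq R^3 \supseteq \cdots$ to stabilize. Third, a local nilpotency statement for finitely generated subrings of $R$, which together with the previous two ingredients should preclude the stabilized ideal from being nonzero. Making this final step precise — promoting stabilization to vanishing, and ruling out an idempotent-like nonzero fixed ideal in a ring with no nonzero idempotents — is where I expect the bulk of the work to lie, and it is the step that will most likely require dedicated $\omega$-categorical machinery (finite coordinatization of orbits, rank arguments on definable subgroups of $(R,+)$) rather than purely ring-theoretic input.
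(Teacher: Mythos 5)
The paper does not give a proof of this statement: it is quoted as a black box from Cherlin's two-part paper \cite{cherlinnilringI,cherlinnilringII}, so there is no in-paper argument to compare against, only the original source. Your preliminary reductions are sound as far as they go. $\omega$-categoricity does give a uniform nil index $N$ via the finitely many $1$-types; the additive group has bounded exponent so the characteristic-zero branch is vacuous; the primary decomposition and the filtration by $p^iR$ reduce to $\F_p$-algebras; Nagata--Higman disposes of $p>N$; the chain $R\supseteq R^2\supseteq\cdots$ stabilizes because an $\omega$-categorical ring has only finitely many $\emptyset$-invariant additive subgroups; and the appeal to local nilpotency is legitimate, since a nil PI-algebra over a field is locally nilpotent.

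The problem is that the step you explicitly defer --- promoting $R^n=R^{n+1}$ to $R^n=0$ when $p\le N$ --- is not a finishing detail but the entire content of Cherlin's theorem, and the ingredients you list do not obviously close it. DCC on invariant ideals only yields stabilization, not vanishing; local nilpotency controls finitely generated subrings but says nothing about the globally idempotent ideal $R^n$ (which can certainly be nonzero in a general locally nilpotent nil ring, e.g.\ strictly upper triangular infinite matrices); and an arbitrary $\omega$-categorical ring need not carry a well-behaved rank on definable subgroups of $(R,+)$, so the ``rank arguments'' and ``finite coordinatization'' you gesture at cannot be invoked without further hypotheses such as stability. What Cherlin actually does in \cite{cherlinnilringII} is a genuinely combinatorial \emph{coding} argument: assuming $R^n$ stabilizes at something nonzero, he uses iterated factorizations $a=\sum r_ib_i$ with $b_i\in R^n$ to build definable configurations that encode arbitrarily long data, producing infinitely many types in a bounded number of variables and contradicting $\omega$-categoricity. (Indeed the introduction of the present paper describes its own earlier work as ``emulating a form of coding that appears in the work of Cherlin''.) Your sketch contains no germ of that coding, so as written the proof does not go through.
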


Using a classical result of Zelmanov, the Wilson conjecture for Lie algebras is known to hold \textit{asymptotically} in the sense that for each given $n$, Lie algebras over $\F_p$ which are $n$-Engel are nilpotent for $p$ large enough (without the assumption of $\omega$-categoricity). The question of global nilpotency for small values of $n$ and $p$ is known up to $n = 5$, and is shown in the table below. As it appears, for small values of $n$ and $p$ such as $(n,p) \in \set{ (3,2), (3,5), (4,2), (4,3), (4,5)}$ the problem is then to prove that adding the property of $\omega$-categoricity allows to conclude nilpotency. As mentioned above, $3$-Engel Lie algebras of characteristic $5$ have been dealt with in \cite{delbee2024engel3}, and the solution uses almost no machinery. It is based on fortunate arithmetical properties of $3$-Engel Lie algebras of characteristic $5$, used to emulate a form of coding that appear in the work of Cherlin \cite{cherlinnilringII}. For the case at hand (namely $n = 4, p = 3$), a completely different approach has been undertaken, based on developing and adapting in the definable context, the tools that appear classically in the study of Engel Lie algebra, in the work of Higgins, Kostrikin, Zelmanov, Vaughan-Lee, Traustason and others. It is quite rare in model theory, a discipline which tends to produce very general results, to have such a dependency to parameters.

The number at the intersection of the $n$-Engel row and the characteristic $p$ column is a sharp bound on the nilpotency class of $n$-Engel Lie algebras of characteristic $p$. If that number is $\infty$, we mean that $n$-Engel Lie algebras of characteristic $p$ are not nilpotent in general.

\begin{table}[h]
\centering
\begin{tblr}{
  cells = {c},
  vline{1-2,7} = {-}{},
  hline{1-2,6} = {-}{},
}
\diagbox{$n$-Engel Lie algebra~~}{Characteristic $p$~~} & $2$                     & $3$                     & $5$                     & $7$                     & $>7$  \\
$2$-Engel                                                  & $2$                     & $3$                     & $2$ & $2$                     & $2$\\
$3$-Engel                                                  & $\infty$ & $4$                     & $\infty$ & $4$                     & $4$  \\
$4$-Engel                                                  & $\infty$ & $\infty$ & $\infty$ & $7$                     & $7$  \\
$5$-Engel                                                  & $\infty$ & $\infty$ & $\infty$ & $\infty$ & $11$ 
\end{tblr}
\end{table}The results for $2,3$ and $4$-Engel are due to Higgins \cite{higginsEngel}, Kostrikin \cite{kostrikinAroundBurnside} and Traustason \cite{TraustasonEngel3Engel41993, TraustasonEngel51996, Traustason3-4Engelcharacteristic21995} for exact bounds. The results for $5$-Engel Lie algebras are very recent and due to Vaughan-Lee \cite{vaughanlee20245engelliealgebras}.

\subsection{Strong Lie algebras} At the core of our approach is the notion of a \textit{$k$-strong Lie algebra}, which is a Lie algebra $L$ in which for every $x\in L$, the ideal $I(x)$ generated by $x$ satisfies $I(x)^k = 0$. In other words, $I(x)$ is nilpotent of class $<k$. Our terminology is new, and although this notion is omnipresent in the study of Engel Lie algebras, the author is not aware of a particular terminology for such Lie algebras. Clearly, a $k$-strong Lie algebra is $n$-Engel for $n = k$, and the converse statement is the one of interest. To our knowledge, this notion appears first in the work of Higman \cite{Higman1956} where the author proves that the associated Lie ring of a group of exponent $5$ -which is $4$-Engel- is $k$-strong for some $k$ (the argument is known as Higman algebra, see \cite{VaughanLee2011LiemethodsEngelgroups}) offering a solution to the restricted Burnside problem for groups of exponent $5$. After Zelmanov's celebrated work \cite{Zelmanov1990,Zelmanov1991}, it was known that $n$-Engel Lie algebras are locally nilpotent. More precisely, Zelmanov proves that there exists a function $f = f_{n,p}:\N \to \N$ such that every $r$-generated $n$-Engel Lie algebra over $\F_p$ is nilpotent of class $f(r)$. In a quest to estimate the growth rate of the functions $f_{n,p}$, the property of being a $k$-strong Lie algebra was revisited by the work of Havas, Newman and Vaughan-Lee \cite{HavasNewmanVaughanlee1990}, Traustason \cite{TraustasonEngel3Engel41993} and again very recently by Vaughan-Lee \cite{vaughanlee20245engelliealgebrasii} because of the following observation: a $k$-strong Lie algebra generated by $r$ elements is nilpotent of class $\leq kr$. In other words, $k$-strong Lie algebras constitute a class of $n$-Engel Lie algebras for which the nilpotency function $f_{n,p}$ grows \textit{linearly}.

There are examples of $n$-Engel Lie algebras for which the growth is not linear, in fact, which are not $k$-strong for any $k$. In \cite{TraustasonEngel3Engel41993}, Traustason proved that there exists a $3$-Engel Lie algebra over $\F_2$ which is not $k$-strong for any $k$, although the growth rate of the nilpotency function in that case is polynomial. Very recently, Vaughan-Lee \cite{vaughanlee20245engelliealgebrasii} proved an analogous result for some $5$-Engel Lie algebra over $\F_3$.  However, no example exists when $n<p$ and Zelmanov conjectures that none should exist.

\begin{conjecture}[Zelmanov, \cite{Zelmanov95Lieringmethods}]
For $n<p$, every $n$-Engel Lie algebra over $\F_p$ is $k$-strong for some $k$.
\end{conjecture}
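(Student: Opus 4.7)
The key condition is $n<p$, which allows complete multilinearization of the Engel identity: substituting $y = y_1 + \cdots + y_n$ in $[x,y^n]=0$ and extracting the multilinear part (legitimate since the relevant multinomial coefficients are units in $\F_p$), one obtains the symmetric identity
\[ \sum_{\sigma \in S_n} [x, y_{\sigma(1)}, \ldots, y_{\sigma(n)}] = 0. \]
The plan is to combine this linearized Engel identity with Zelmanov's local nilpotency theorem and a sandwich analysis \`a la Kostrikin. Local nilpotency reduces the problem to finite-dimensional algebras; but for $k$-strongness one needs a bound on the class of $I(x)$ that is independent of $L$ and, crucially, of its number of generators. So the first move is simply to fix $x\in L$ and study the action of $\ad x$ together with the polarized identity above on $I(x)$ alone.

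The core step is to exhibit inside $I(x)$ enough sandwich elements in the sense of Kostrikin (elements $a$ with $(\ad a)^2 = 0$ and the additional thinness axioms) to force $I(x)$ to be generated, as a Lie algebra, by sandwiches of uniformly bounded nesting depth. Since every ideal generated by a sandwich is nilpotent of bounded class, this would yield $k$-strongness with an explicit $k$ depending only on $n$ and $p$. The polarized Engel identity, together with $n<p$, should be the source of these sandwiches: iterated substitution inside $I(x)$ forces $(\ad v)^n = 0$ on large subspaces for many $v$, and then symmetrization --- once more using that $n!$ is invertible --- should convert these weak vanishing conditions into genuine sandwich relations. One may then try to organize the Lie products in $I(x)$ by the number of sandwich factors required to build them, and control their nilpotency class by induction on that number, in the style of Higman's original argument for exponent $5$ groups.

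The main obstacle is quantitative rather than qualitative. Zelmanov's local nilpotency and Kostrikin's existence of sandwiches are both available, but they produce bounds depending on the number of generators, not intrinsic bounds on $I(x)$. In the negative direction, the Traustason \cite{TraustasonEngel3Engel41993} and Vaughan-Lee \cite{vaughanlee20245engelliealgebrasii} examples in characteristic $\leq n$ show that any proof must use the hypothesis $n<p$ in a non-removable way, so the polarization and subsequent symmetrization sketched above must be the essential engine. A realistic path to partial progress, in the spirit of the present paper and of recent work of Vaughan-Lee, is to verify low-dimensional cases via computer algebra in the free $n$-Engel Lie algebra over $\F_p$, extract a combinatorial pattern governing how sandwiches propagate through commutators, and then attempt to transport that pattern into a uniform bound on the class of $I(x)$ via a carefully chosen filtration of $I(x)$ by sandwich-depth.
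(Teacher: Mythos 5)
This statement is a \emph{conjecture}, attributed to Zelmanov, and the paper does not prove it; it is stated as an open problem motivating the subsequent reduction strategy. There is therefore no ``paper proof'' to compare against, and you should not expect to be able to prove it. Your text correctly recognizes this: it is not a proof but a research sketch, and it honestly names its own fatal gap.

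That gap is worth restating precisely, because it is exactly why the statement remains open. Linearization of $[x,y^n]=0$ when $n<p$ is standard and unproblematic, and both Zelmanov's local nilpotency theorem and Kostrikin's sandwich machinery are available. But $k$-strongness asks for a bound on the nilpotency class of $I(x)$ that is \emph{independent of the ambient algebra} $L$ and of its rank, whereas Zelmanov's theorem only bounds the class of $r$-generated subalgebras in terms of $r$, and Kostrikin's sandwich arguments likewise produce bounds that degrade with the number of generators. The ideal $I(x)$ is a single-generator ideal but not a finitely generated Lie subalgebra in general, so local nilpotency gives you nothing directly about its class. Nothing in your sketch closes this gap: the proposed ``filtration by sandwich-depth'' is a plausible organizing principle, but you give no mechanism that would force the depth to be uniformly bounded. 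Indeed, the paper cites Traustason and Vaughan-Lee precisely to illustrate that without $n<p$ the statement is false, and for $n<p$ the best that is currently known are the case-by-case results ($n\leq 5$) in the table of the introduction, obtained by hard computation rather than by a conceptual sandwich argument. So the correct thing to report here is that this is an open conjecture, that your sketch identifies the right obstruction but does not overcome it, and that the paper's own strategy is to \emph{assume} the conjecture (or a substitute for it, Fact~\ref{fact:traustason} in the $n=4$, $p=3$ case) and then reduce the Wilson conjecture to the $k$-strong setting, rather than to prove the conjecture itself.
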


Regardless of the veracity of Zelmanov's conjecture, it turns out, using a classical result of Kostrikin \cite{kostrikinAroundBurnside}, that the Wilson conjecture for $\omega$-categorical $n$-Engel Lie algebras over $\F_p$ with $n<p$ reduces to proving that all $\omega$-categorical $k$-strong Lie algebras are nilpotent.

We do a systematic study of $k$-strong Lie algebras, which are locally nilpotent almost by definition but still worth investigating. We conjecture that every $k$-strong Lie algebra satisfies the following identity (see Conjecture \ref{conj:toasties1}):
\[ x^{k-1}y^{k-1} = (-1)^{k-1}y^{k-1}x^{k-1}.\]
 We check that the above identity holds for $k\leq 5$ using the GAP package ModIsom \cite{ModIsom} which provides a neat nilpotent quotient algorithm for associative algebras. The method we use to check the conjecture is quite laborious and unsatisfactory: we deduce by hand a set of equations (in the envelope algebra) that follows from the condition $I(x)^k = 0$ and then compute a nilpotent quotient of a rank $2$, finitely presented associative algebra where the relators are given by this set of equations. For $k = 4$, we need $14$ equations (Theorem \ref{thm:toastiesconjecture4}) and for $k = 5$ we need $27$ equations (Theorem \ref{thm:toastiesconjecture5}). As suggested by Vaughan-Lee, an interesting lead would be to implement a nilpotent quotient algorithm to compute directly in $r$-generated $k$-strong Lie algebras, which should run smoothly for small values of $r$ and $k$. 

Our main result is the following.

\begin{theorem*}[\ref{thm:4strongLAnilpotent}]
    Every $\omega$-categorical $4$-strong Lie algebra over a field of characteristic $\neq 2$ is nilpotent.
\end{theorem*}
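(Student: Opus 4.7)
The plan is to argue by contradiction, adapting Kostrikin's sandwich machinery to the definable setting. Suppose $L$ is an $\omega$-categorical, $4$-strong Lie algebra over a field of characteristic $\neq 2$ which is not nilpotent. Since $4$-strong Lie algebras are locally nilpotent (an $r$-generated one being nilpotent of class $\leq 4r$), the failure of global nilpotency must arise from interactions beyond any finite generating set, accessible only through the definable structure.

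First I would reduce to a minimal counterexample. The lower central series $L \supseteq L^2 \supseteq \cdots$ consists of $\emptyset$-definable characteristic ideals, and $\omega$-categoricity ensures that each of its quotients carries only finitely many $\Aut(L)$-orbits. Exploiting this Noetherian feature, one can pass to a definable $\omega$-categorical $4$-strong section of $L$ that is minimal non-nilpotent, meaning that every proper non-trivial definable quotient is nilpotent. Replacing $L$ by this section, one may assume $L$ is \emph{critical} in this sense.

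The second and most delicate step is the systematic production of sandwiches, i.e. elements $c \in L$ with $(\ad c)^2 = 0$. The $4$-strong condition $I(x)^4 = 0$ already yields that $\ad a$ annihilates $I(x)$ whenever $a \in I(x)^3$, so that elements of the shape $[z,x^3]$ are very close to being sandwiches. The extra algebraic leverage comes from the arithmetic identity $x^3 y^3 = -y^3 x^3$ for Lie elements in the enveloping algebra, verified for $k = 4$ in Theorem~\ref{thm:toastiesconjecture4}; this should permit upgrading such \emph{partial} sandwiches to genuine ones, and moreover producing families of sandwiches satisfying the compatibility conditions (thinness and the like) required by the Kostrikin framework. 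Since the sandwich set is $\Aut(L)$-invariant, the ideal $J$ it generates is definable, so by criticality $J$ is either $0$ or $L$.

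The endgame splits along this dichotomy. If $J = L$, then $L$ is generated by sandwiches, and I would invoke a Kostrikin-type structural theorem for sandwich-generated $4$-strong Lie algebras -- whose proof leans centrally on the identity $x^3 y^3 = -y^3 x^3$ -- to conclude that $L$ is nilpotent, contradicting the choice of $L$. If $J = 0$, then $L$ has no non-trivial sandwiches at all, which in turn contradicts the production step. The main obstacle I expect lies exactly in this production: obtaining sandwiches of the specific shape needed to run the Kostrikin reduction, rather than mere elements that annihilate large but insufficient subalgebras. Making the production \emph{uniformly definable}, so that the resulting ideal $J$ is actually captured by the $\omega$-categorical structure, and weaving the identity $x^3 y^3 = -y^3 x^3$ into that production, is where the bulk of the definable adaptation of the classical Higgins--Kostrikin--Zelmanov--Vaughan-Lee--Traustason machinery has to be carried out; failing this, the Noetherian descent collapses and the whole strategy unravels.
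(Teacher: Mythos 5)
Your proposal correctly identifies the two algebraic ingredients that drive the argument — the identity $x^3y^3 = -y^3x^3$ for Lie elements (Theorem~\ref{thm:toastiesconjecture4}) and a Kostrikin-style reduction to algebras generated by sandwich-like elements — but the crucial step that converts those ingredients into nilpotency is left as a black box, and that black box does not exist. You invoke ``a Kostrikin-type structural theorem for sandwich-generated $4$-strong Lie algebras'' to finish, but no such theorem is available; supplying a replacement for it is precisely the content of the paper's proof. What actually happens is this: one reduces (via a maximal chain of \emph{characteristic} ideals together with Corollary~\ref{cor:wilsontheorem}, not a minimality argument on the lower central series, which need not terminate at $0$) to the case where $L$ is a \emph{toastie} algebra — i.e.\ generated by elements whose principal ideal is abelian, a strictly stronger condition than $(\ad c)^2 = 0$, obtained from the observation that $[a,b^2] \in I(b)^2$ and $I(b)^2$ is abelian when $I(x)^4=0$. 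One then extends scalars to a field of size $\geq 4$ (a definable operation by \S\ref{subsec:interpretation}, and permitted for toastie algebras by Lemma~\ref{lm:scalarextensiontoastie}) so that the linearization behind Theorem~\ref{thm:toastiesconjecture4} applies. The identity $b^3c^3 = -c^3b^3$ then makes the subalgebra of $A(L)$ generated by $\set{b^3c^3}$ \emph{commutative}, so Theorem~\ref{thm:interpretationBa} interprets an $\omega$-categorical commutative nil associative algebra $B_a$ inside $L$, to which \emph{Cherlin's theorem} (Fact~\ref{fact:cherlinomegacatnilringarenilpotent}) applies. That is what yields the uniform bound $[ab_1^3\cdots b_s^3]=0$; a descent through the ideals $I_j = \Span([ab_1^3\cdots b_j^3])$ (ideals by Lemma~\ref{lm:toastielinearspanengel}) together with Corollary~\ref{cor:wilsontheorem} then finishes.

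Two further points. First, your dichotomy ``$J=0$ or $J=L$'' is handled differently and more concretely in the paper: if the ideal generated by toasties is proper in a characteristically simple algebra, then $[a,b^2]=0$ for all $a,b$, i.e.\ $L$ is $2$-Engel, hence nilpotent by Higgins — there is no contradiction to derive with a separate production lemma. Second, Cherlin's theorem on $\omega$-categorical nilrings is the only place $\omega$-categoricity is converted into an actual nilpotency statement; omitting it (as your sketch does) leaves the argument with no mechanism to go from ``locally nilpotent plus finitely many orbits'' to ``globally nilpotent''. Without naming and using Cherlin's theorem, the strategy unravels at exactly the point where you say the bulk of the work must be done.
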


 The proof consist in making use of the identity above to interpret a commutative subalgebra of the enveloping algebra and then we applying the result of Cherlin (Fact \ref{fact:cherlinomegacatnilringarenilpotent}). By a result of Traustason \cite{TraustasonEngel3Engel41993}, every $4$-Engel Lie algebra of characteristic $3$ is $4$-strong, hence we deduce the case at hand.
\begin{corollary*}[\ref{thm:casen=4p=3}]
    Every $\omega$-categorical $4$-Engel Lie algebra over $\F_3$ is nilpotent.
\end{corollary*}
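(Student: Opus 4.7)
The plan is to derive the corollary as a direct application of the main theorem, Theorem \ref{thm:4strongLAnilpotent}, together with a classical result of Traustason \cite{TraustasonEngel3Engel41993}. Specifically, Traustason showed that in characteristic $3$ the $4$-Engel identity already forces every principal ideal $I(x)$ to satisfy $I(x)^4 = 0$, i.e.\ every $4$-Engel Lie algebra over $\F_3$ is automatically $4$-strong. This bridges exactly between the hypothesis we are given (a $4$-Engel identity over $\F_3$) and the hypothesis of the main theorem (a $4$-strong structure over a field of characteristic $\neq 2$).

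Concretely, let $L$ be an $\omega$-categorical $4$-Engel Lie algebra over $\F_3$. First, I would invoke Traustason's theorem to upgrade the $4$-Engel identity to the $4$-strong property: for every $x \in L$, the ideal $I(x)$ generated by $x$ is nilpotent of class at most $3$, that is $I(x)^4 = 0$. This is a purely equational / identity-level consequence of being $4$-Engel in characteristic $3$ and requires no model-theoretic input. Note that $\omega$-categoricity is preserved under this reinterpretation since $L$ is the same underlying structure with the same language.

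Next, since $\chara(\F_3) = 3 \neq 2$, the hypotheses of Theorem~\ref{thm:4strongLAnilpotent} are satisfied by $L$: it is $\omega$-categorical, it is $4$-strong, and it lives over a field of characteristic different from $2$. Applying Theorem~\ref{thm:4strongLAnilpotent} yields that $L$ is nilpotent, which is the desired conclusion.

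There is essentially no new obstacle in this step; the entire substance of the proof has been carried out upstream, on one hand in Traustason's identity-level analysis of $4$-Engel Lie algebras in characteristic $3$, and on the other hand in Theorem~\ref{thm:4strongLAnilpotent}, whose proof goes through the $(k{-}1)$-th power identity $x^{3}y^{3} = -y^{3}x^{3}$ to interpret a commutative nilring inside the enveloping algebra and then appeals to Cherlin's Fact~\ref{fact:cherlinomegacatnilringarenilpotent}. The only thing to verify is the compatibility of the two inputs, namely that the characteristic $3$ assumption of Traustason's theorem is consistent with the characteristic $\neq 2$ assumption of Theorem~\ref{thm:4strongLAnilpotent}, which is immediate.
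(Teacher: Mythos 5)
Your proof is correct and follows exactly the paper's own argument: invoke Traustason's result (Fact~\ref{fact:traustason}) that $4$-Engel Lie algebras in characteristic $\neq 2,5$ are $4$-strong, then apply Theorem~\ref{thm:4strongLAnilpotent}. Nothing to add.
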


We also deduce that $\omega$-categorical $5$-strong Lie algebras of characteristic $p\neq 2,5$ are nilpotent (Corollary \ref{cor:5strongomegacategoricalnilpotent}).

\subsection{Toastie algebras} In a Lie algebra, we call \textit{toastie} an element which generates an abelian ideal, which corresponds -in the terms of Kostrikin- to a \textit{sandwich of infinite thickness}. Lie algebras generated by toasties -\textit{toastie algebras}- are used here as building blocks for decomposing $\omega$-categorical $k$-strong Lie algebras. The main quality of toastie algebras lies in the fact that identities are automatically equivalent to an associated \textit{linearized} version, regardless of the characteristic of the field. To exemplify this, it is standard that the $n$-Engel condition
\[
[x,y^n] = 0 
\]
implies its linearization
\[
\sum_{\sigma\in \fS_n} [x,y_{\sigma(1)},y_{\sigma(2)},\ldots, y_{\sigma(n)}]=0
\]
but only if $n<p$ do we have the converse implication in general. In a toastie algebra, every element is a sum of toasties and every identity is equivalent to a linearized one, without any dependence to the characteristic of the field. This implies the following crucial lemmas in our proof.
\begin{enumerate}
    \item The extension of scalars of a $n$-Engel toastie algebra is again $n$-Engel (and the same holds for $k$-strong).
    \item In a toastie algebra, the vector span of the set of values of a given polynomial is an ideal.
\end{enumerate} 
Those two facts should be familiar to anyone who has seen Kostrikin or Zelmanov's proof of the restricted Burnside problem. The trick used by Kostrikin and Zelmanov in order to be able to reach this point is to extend the scalars to an arbitrary infinite field, an operation which would not preserve $\omega$-categoricity. Similarly, the interplay between nonlinear and linear identities is omnipresent in the work of Zelmanov. The usual trick to ensure this interplay is to tensor out by an infinite Grassman algebra, an operation which is not permitted in our case, as it would also break $\omega$-categoricity. Toastie algebras are used here to circumvent the fact that, to remain in the category of $\omega$-categorical Lie algebras, few algebraic operations are permitted.

\subsection{Solvability vs nilpotency} One of the reasons why $n$-Engel Lie algebras in characteristic $p$ with $n<p$ are better understood comes from the classical fact of Higgins \cite{higginsEngel} that for these, solvability and nilpotency are equivalent notions. In \cite{wilson1981}, Wilson proves using the result of Cherlin that this is also true for locally nilpotent $\omega$-categorical groups, and we prove here by the same method that this is also true for $\omega$-categorical Engel Lie algebras (Corollary \ref{cor:wilsontheorem}). This fact is crucial for our solution of the case at hand.

\subsection{Conclusion} The tools developed here only allow us to conclude for the very particular case of $\omega$-categorical $4$-strong Lie algebras of characteristic $\neq 2$ hence for $4$-Engel Lie algebras of characteristic $3$. We also apply those methods for further reductions of the Wilson conjecture for $n$-Engel Lie algebras in characteristic $p>n$, see Theorem \ref{thm:furtherreduction}. However, those reductions are not quite satisfactory as they rely on our conjecture for $k$-strong Lie algebras. Nonetheless, this work sheds a light on unexploited connections between the methods developed for the solution of the restricted Burnside problem and the model theory involved in the Wilson conjecture for $\omega$-categorical Lie algebras. In the long run, we hope that the approach developed here can serve as a background to tackle more general cases, such as the case $n<p$. The case of $4$-Engel Lie algebras in characteristic $5$ is still resisting this approach and is a good test question. The case of characteristic $2$ stays quite impervious to those methods.

\subsection*{Acknowledgement} This paper is dedicated to Dugald Macpherson, for his 30 years of service at the University of Leeds, and for his unwavering support and friendship. I am still very thankful to Gunnar Traustason for his PhD thesis. I am very grateful to Michael Vaughan-Lee for the many runs of his algorithm on my behalf, and I am sorry that I still haven't figured out how to run his code by myself. I am very grateful to Laurent Bartholdi for his nilpotent quotient algorithm run for checking the $k$-strong conjecture for $k = 4$ and noticing an oddity in characteristic $2$. As I was trying to learn how to compute in GAP, I encountered a very dedicated and helpful community of people, and I want to give a special thanks to Eamon O'Brian (anupq package \cite{ANUPQ}) and Willem de Graaf (LieRing package \cite{LieRing}). My last thanks are directed to Guy, gamekeeper of the Ruantallain estate, for maintaining the bothy that sheltered me for several
nights on the western part of the Loch Tarbert, Jura, and from where most of this paper was written.

\section{Preliminaries}
\subsection{Notations} In a Lie algebra $L = (L,+,[.,.])$ over a field $\F$ we use the left-normed bracket notation, defined inductively by $[a_1,\ldots,a_n] = [[a_1,\ldots,a_{n-1}],a_n]$. We also often remove the commas $[a_1\ldots a_n] = [a_1,\ldots,a_n]$. When $a_2 = \ldots = a_n = b$ we simply write $[a_1\ldots a_n] = [ab^{n-1}]$. For $S,R\seq L$ we denote by $[S,R]$ the vector span of $\set{[a,b]\mid a\in L,b\in R}$ and iteratively $[S,R^n] = [R,S,\ldots,S] = [[R,\ldots, S],S]$. We denote by $I(S)$ the ideal in $L$ generated by $S$, i.e. the vector space
\[I(S) = \Span_\F(S)+[S,L]+[S,L,L]+\ldots\]
$I(S)$ is in particular a Lie subalgebra of $L$. A product of the form $abc$ (without brackets) will always stand for a product in an associative algebra, whereas $[abc] = [[a,b,],c]$ (with brackets) is always a product in a Lie algebra.

\subsection{Enveloping algebra} Let $L$ be an algebra over a field $\F$. Let $\End(L)$ be the $\F$ associative algebra of linear endomorphisms of the vector space $L$. For every $a\in L$ the map $\ad(a) : x\mapsto [x,a]$ is an endomorphism of $L$, i.e. an element of $\End(L)$. We define $A(L)$ as the subalgebra of $\End(L)$ generated by $\ad(L)$ i.e. all the endomorphisms $\ad(a)$ for $a\in L$. Elements of $A(L)$ which are equal to $\ad(a)$ for some $a\in L$ are called Lie elements of $A(L)$ and we will most of the time identify an element $a$ of $L$ with its adjoint $\ad(a)\in A(L)$. If $a,b\in L$ are seen as elements of $A(L)$, then the commutator $[a,b] = ab-ba$ (in the associative algebra sense) is the adjoint of the bracket $[a,b]\in L$, since by the Jacobi identity we have $[c,[a,b]] = [c,a,b] - [c,b,a]$, and we identify $[c,a,b]$ and $[c,b,a]$ with the elements $\ad(b)(\ad(a)(c))$ and $\ad(a)((\ad(b)(c))$ respectively.

\subsection{Caveat} There is an intrinsic ambiguity in our convention: in $A(L)$, the element $[ab^2]$ could stand for either $[[a,b],b] = [ab-ba,b] = ab^2-2bab+b^2a$ or for the commutator of $a\in L$ and $b^2\in A(L)$ which would be $[a,b^2] = ab^2-b^2a$. We will \textbf{never} mean the latter, as we will only consider commutators of Lie elements, and those always result in Lie elements. In other words, in $A(L)$, a commutator expression $[a,b,...]$ is always meant to result in a Lie element of $A(L)$.

\subsection{Extension of scalars}\label{subsec:interpretation} Let $L$ be a Lie algebra over a field of positive characteristic $p$. From a model theoretic point of view, we will consider $L$ in the language of Lie rings $\LL = \set{+,[.,.],0}$, which naturally encompasses the $\F_p$-vector space structure. Let $F$ be a finite field extension of $\F_p$. We argue that the extension of scalars 
\[\tilde L = L\otimes_{\F_p} F\]
is $0$-definable in $L$. Recall that the bracket $[\![.,.]\!]$ in the Lie algebra $\tilde L$ is obtained by extending linearly the expression:
\[[\![a\otimes \alpha, b\otimes \beta]\!] = [a,b]\otimes (\lambda\mu)\]
where $a,b\in L$ and $\alpha,\beta\in F$. Let $e_1,\ldots,e_d$ be an $\F_p$-basis of $F$. Write $e_ie_j = \sum_{k = 1}^d \lambda_k^{i,j} e_k$ for $\lambda_k^{i,j}\in \F_p$ and define $M_k$ to be the matrix $(\lambda_k^{i,j})_{1\leq i,j\leq d}$. Then one can write the multiplication in the field $F$ identified with $\F_p\times\ldots\times \F_p$ as:
\[\vec \alpha\cdot \vec \beta = (\alpha_1,\ldots,\alpha_d)\cdot(\beta_1,\ldots,\beta_d) = (\vec \alpha M_1\vec \beta,\ldots,\vec \alpha M_d \vec \beta)\]
where $\vec \alpha M_k\vec \beta = \sum_{1\leq i,j\leq d} \alpha_i\lambda_k^{i,j}\beta_j$. 
Now, every element of $\tilde L$ can be uniquely written as $\sum_i a_i\otimes \alpha_i$ for $a_i\in L$ and $\alpha_i\in F$, hence every element of $\tilde L$ can be uniquely written as a sum:
\[a_1\otimes e_1+\ldots+a_d\otimes e_d\]
for $a_1,\ldots,a_d\in L$. It follows that the bracket of two elements $\sum_{i = 1}^d a_i\otimes e_i, \sum_{i = 1}^d b_i\otimes e_i$ of $\tilde L$ is given by the formula:
\begin{align*}
    [\![\sum_{i = 1}^d a_i\otimes e_i, \sum_{i = 1}^d b_i\otimes e_i]\!]
               &= \sum_{1\leq i,j\leq d} [a_i,b_j]\otimes e_ie_j\\
               &= \sum_{i,j} [a_i,b_j]\otimes (\sum_k \lambda_k^{i,j} e_k)\\
               &= \sum_{1\leq i,j,k\leq d} \lambda_k^{i,j} [a_i,b_j]\otimes e_k.
\end{align*}

It should now be clear how to get a first-order definition of $\tilde L$ in the structure $(L,+,[.,.],0)$. Let $V$ be the cartesian product $L^d$ for which we define the new addition componentwise, and the new $0$ to be $(0,\ldots,0)$. We then define the new bracket of two elements $(a_1,\ldots,a_d),(b_1,\ldots,b_d)\in V$ via the formula
\[ [\![ (a_1,\ldots,a_d),(b_1,\ldots,b_d) ]\!] = (\sum_{1\leq i,j\leq d} \lambda_1^{i,j} [a_i,b_j],\sum_{1\leq i,j\leq d} \lambda_2^{i,j} [a_i,b_j],\ldots,\sum_{1\leq i,j\leq d} \lambda_d^{i,j} [a_i,b_j]).\]
By the above, the structure $(V,+,[\![.,.]\!],0)$ is isomorphic to $\tilde L$. 

Of course, nothing here is really specific to Lie algebras, and given any algebra $A$ over a field $F$ and a finite-dimensional algebra $B$ over $F$, the algebra $A\otimes _F B$ is definable in the $F$-algebra structure $A$.

\section{Interpretation in the enveloping algebra}
Let $L$ be a Lie algebra, $\ad:L\to A(L)$ the adjoint representation, where $A(L)$ is the algebra of endomorphisms generated by $\ad(L)$.

Given a subalgebra $B$ of $A(L)$ and a fixed element $a\in L$ such that for all $f,g\in B$, we have $f\circ g(a) = g\circ f(a)$, we denote by $B_a$ the associative algebra defined by the following:
\begin{itemize}
    \item The domain of $B_a$ is given by the quotient of $B$ by the equivalence relation defined by $f\sim g$ if and only if $f(a) = g(a)$.
    \item Denote by $\tilde f, \tilde g\in B_a$ the classes of $f,g\in B$ modulo $\sim$. We define the sum $\tilde f +\tilde g$ to be the class of $(f+g)\in B$ modulo $\sim$. It is immediate that this addition is well-defined on $B_a$.
    \item For $\tilde f,\tilde g\in B_a$, we define the multiplication $\tilde f \tilde g$ to be the class of $f\circ g$ modulo $\sim$. Here it is crucial that $f\circ g(a) = g\circ f(a)$ in order for the multiplication in $B_a$ to be well-defined. To see this, assume $f\sim f'$ and $g\sim g'$, then to get $f\circ g \sim f'\circ g'$ we compute: $f\circ g(a) = f(g(a)) = f(g'(a)) = g'(f(a)) = g'(f'(a)) = f'(g'(a)) = f'\circ g'(a)$.
\end{itemize}

A nonconstant (associative, noncommutative) monomial $p(x_1,\ldots,x_s)$ over $\F_p$ is an expression of the form $\lambda x_{i_1}^{\alpha_1}\cdots x_{i_k}^{\alpha^k}$ with $\alpha_i\geq 1$, $\lambda\in \F_p$. In any Lie algebra $L$ over $\F_p$, if $ b_1,\ldots,b_s\in L$, then $p(b_1,\ldots,b_s)$ is an element of $A(L)$ and for all $a\in L$, the expression $[ap(b_1,\ldots,b_s)] = \lambda [ab_{i_1}^{\alpha_1}\cdots b_{i_k}^{\alpha^k}]$ is a well-defined element of $L$.
\begin{theorem}\label{thm:interpretationBa}
    Let $L$ be an $\omega$-categorical Lie algebra over $\F_p$. Let $\cF$ be a (possibly infinite) family of nonconstant monomials over $\F_p$ and $X\seq A(L)$ be given by 
    \[X = \set{p(\ad(b_1),\ldots,\ad(b_s))\mid b_1,\ldots,b_s\in L, p(x_1,\ldots,x_s)\in \cF}
    \]
    Let $B\seq A(L)$ be the associative algebra generated by $X$ and $a\in L$ an element such that $f\circ g(a) = g\circ f(a)$ for all $f,g\in B$. Then the associative algebra $B_a$ is interpretable in $L$.
\end{theorem}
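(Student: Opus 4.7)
The plan is to interpret $B_a$ as the image set $T := \{f(a) : f \in B\} \subseteq L$, which is in bijection with $B_a$ via $\tilde f \mapsto f(a)$ by the very definition of $\sim$. It suffices to establish that $T$ is definable over the parameter $a$, and that the addition and multiplication of $B_a$ correspond to $\{a\}$-definable operations on $T$.

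First I would show $T$ is $\{a\}$-definable. For a fixed \emph{shape} of expression --- a tuple $(k, c_1,\ldots,c_k, r_1,\ldots,r_k, q_{i,j})$ specifying the number of summands, their $\F_p$-coefficients, the length of each product, and the monomials $q_{i,j} \in \cF$ of each factor --- evaluation at $a$ is an $\{a\}$-definable function of the adjoint parameters $b_{i,j,\ell} \in L$, so its image is an $\{a\}$-definable subset of $L$. Although the family of shapes is infinite when $\cF$ is infinite, by $\omega$-categoricity (Ryll-Nardzewski) there are only finitely many $\{a\}$-definable subsets of $L$, so these shapes yield only finitely many distinct images. Hence $T$ is a finite union of $\{a\}$-definable sets, and is itself $\{a\}$-definable.

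Next I would handle the operations. Addition corresponds to the restriction of $+$ on $L$ to $T$, since $(f+g)(a) = f(a)+g(a)$. For multiplication, the identity $(f\circ g)(a) = f(g(a)) = f(v)$ with $u = f(a)$, $v = g(a)$ lets me describe the product graph as
\[
\{(u, v, w) \in T^3 : \exists f \in B,\ f(a) = u \text{ and } f(v) = w\}.
\]
The commutativity hypothesis is exactly what makes this relation single-valued: if $f, f' \in B$ satisfy $f(a) = f'(a)$, then for $v = g(a) \in T$ one has $f(v) = f(g(a)) = g(f(a)) = g(f'(a)) = f'(g(a)) = f'(v)$ by two applications of the hypothesis. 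The same shape-and-$\omega$-categoricity argument applied to the parameterization of $(u, v, w)$ through shapes of $f$ then shows this graph is $\{a\}$-definable. Hence multiplication on $T$ is $\{a\}$-definable and $B_a$ is interpretable in $L$.

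The main obstacle is ensuring definability of $T$ and of the multiplication graph in the presence of a possibly infinite family $\cF$ and unbounded word length in $B$. The crucial point is that although the formal shapes of elements of $B$ form an infinite family in general, their evaluations at $a$ (and at triples involving $a$) collapse to only finitely many $\{a\}$-definable subsets of $L$, by the finiteness of $n$-types over $\{a\}$ provided by $\omega$-categoricity. The commutativity hypothesis is indispensable here: it is what makes multiplication well-defined on the quotient $B_a$ in the first place and, equivalently, what turns the relation above into a function.
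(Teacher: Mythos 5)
Your proof is correct, and it takes a genuinely different---and in my view cleaner---route than the paper's. The paper first establishes two claims giving a \emph{uniform} bounded representation of elements of $B$: every $h\in B$ evaluates at $a$ as $\sum_{i=1}^N[aq_i(\vec b_i)]$ for a fixed finite collection $q_1,\dots,q_N$ of monomials and a fixed $N$, where this bound is obtained by observing that the union over all ``shapes'' of the definable sets $A(n,k_1,\dots) \subseteq L\times L$ is automorphism-invariant (over $\emptyset$, since the pair ranges over $L\times L$) and hence a finite sub-union. The paper then realizes $B_a$ as a quotient of $L^{N^2}$ by the $\{a\}$-definable equivalence relation $b\sim c \iff f_b(a) = f_c(a)$. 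You instead interpret $B_a$ directly on the image set $T = B(a) \subseteq L$, which is isomorphic to $B_a$ by the map $\tilde f \mapsto f(a)$; you obtain definability of $T$ (and of the multiplication graph in $T^3$) by the coarser but sufficient observation that in the $\omega$-categorical structure $(L,a)$ there are only finitely many $\{a\}$-definable subsets of $L$ (resp.\ $L^3$), so the infinite union over shapes collapses to a finite one. Your verification that commutativity at $a$ makes the product graph single-valued is exactly the computation the paper uses to show well-definedness of its quotient multiplication. What the paper's approach buys, at the cost of the two preparatory claims, is the explicit uniform parameterization of $B$ by tuples from $L^{N^2}$, which makes the interpretation's formulas concrete and makes visible that the construction is uniform as $a$ varies; what your approach buys is the elimination of the quotient construction and the technical padding with dummy variables (including the paper's use of the hypothesis that the monomials in $\cF$ are nonconstant). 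For the downstream application---obtaining $\omega$-categoricity of $B_a$ so as to apply Cherlin's theorem---the two interpretations serve equally well.
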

\begin{proof}
    We fix $\cF,X,B$ as in the hypotheses. 
    \begin{claim}\label{claim:interpretationreduction1}
        There exists $M\in \N$ and a finite subset $\cF_0\seq \cF$ such that for all $h\in B$ and $a\in L$ there exists $\vec z_1,\ldots,\vec z_M$ such that $ \abs{\vec z_i} = k_i\leq M$ and $\vec z_i = (p_{1,i}(\vec t_{1,i}),\ldots,p_{k_i,i}(\vec t_{k_i,i}))$ for $p_{i,j}\in \cF_0$ and such that 
        \[h(a) = \sum_{i = 1}^M [a,p_{1,i}(\vec b_{1,i}),\ldots,p_{k_i,i}(\vec b_{k_i,i})]\]
        for tuples $\vec b_{i,j}$ of elements of $L$ with $\abs{\vec b_{i,j}} = \abs{\vec t_{i,j}}$.
    \end{claim}
    \begin{proof}
         For $n\in \N$, $k_1,\ldots, k_n\leq n$, and $p_{1,1}(\vec t_{1,1}),\ldots,p_{k_1,1}(\vec t_{k_1,1}),\ldots, p_{1,n}(\vec t_{1,n}),\ldots, p_{k_n,n}(\vec t_{k_n,n})\in \cF$, consider the $0$-definable set $A(n,k_1,\ldots,k_n, (p_{j,i})_{i\leq n,j\leq k_i})\seq L\times L$ given by 
        \[\set{(x,y)\mid \exists \vec t_{1,1}\ldots \exists \vec t_{k_n,n}\ y = \sum_{i = 1}^n [x,p_{1,i}(\vec t_{1,i}),\ldots,p_{k_i,i}(\vec t_{k_i,i})]}.\]
        Each $A(n,k_1,\ldots,k_n, (p_{j,i})_{i\leq n,j\leq k_i})$ is invariant under automorphisms and so is the union $A$ ranging over all data $n\in \N$, $k_1,\ldots, k_n\leq n$, $(p_{j,i})_{i\leq n,j\leq k_i}\seq \cF$. It follows from $\omega$-categoricity that $A$ is a union of finitely many sets of the form $A(n,k_1,\ldots,k_n, (p_{j,i})_{i\leq n,j\leq k_i})$. Set $M$ to be the maximum of those finitely many $n$'s and $\cF_0$ the union of those finitely many $(p_{j}^i)$. As any element of $B$ is given by a map of the form $x\mapsto \sum_{i = 1}^n [x,f_1^i,\ldots,f_{s_i}^i]$, for some $n\in \N$ and $f_j^i\in X$, the pair $(a,h(a))$ belongs to $A$ hence the conclusion of the claim follows. Note that one can always set one of the $[x,p_{1,i}(\vec t_{1,i}),\ldots,p_{k_i,i}(\vec t_{k_i,i})]$ to be zero because the $p_{i,j}$ are nonconstant monomials.
    \end{proof}

    \begin{claim}\label{claim:interpretationreduction2}
        There exists $N\in \N$ and nonconstant monomials $q_1(\vec t_{1}),\ldots,q_{N}(\vec t_N)$ which are products of elements of $\cF$ with $\abs{\vec t_i}= N$ such that for all $h\in B$ and $a\in L$ there exists $\vec b_1,\ldots,\vec b_N$ with $\abs{\vec b_i} = N$ and such that 
        \[h(a) = \sum_{i = 1}^N [aq_{i}(\vec b_i)]\]
    \end{claim}
    \begin{proof}
        With the same notations as in Claim \ref{claim:interpretationreduction1}, there are only finitely many products of the form $p_{1,i}(\vec t_{1,i}),\ldots,p_{k_i,i}(\vec t_{k_i,i})$ with $p_{i,j}(\vec t_{i,j})\in \cF_0$ and $k_i\leq M$. Call the set of all those products $\cQ_0$ and its cardinality $s\in \N$. Any element of $\cQ_0$ is a nonconstant monomial and let $m$ be the maximal arity of such monomial. Any such monomial can be considered as a monomial in $m$ variable. Let $N = \max\set{M,s,m}$. Of course, any element of $\cQ_0$ can be considered as a monomial in $N$ variables. Again, any nonconstant monomial vanishes when evaluated at $(0,\ldots,0)$, hence we may add dummy summand of the form $[x,q(\vec t)]$ with $q\in \cQ_0$ and new variables $\vec t$, or add dummy variables, to ensure $\abs{\vec t} = N$, so that the expression $\sum_{i = 1}^M [a,p_{1,i}(\vec b_{1,i}),\ldots,p_{k_i,i}(\vec b_{k_i,i})]$ equals an expression of the form $\sum_{i = 1}^N [aq_{i}(\vec c_i)]$.
    \end{proof}

    We fix $a\in L$ as in the hypotheses. Let $\sim$ be the equivalence relation on $B$ given by $f\sim g$ if and only if $f(a) = g(a)$. Let $U$ be the cartesian product $L^{N^2}$. For each element $b = (\vec b_1,\ldots, \vec b_N)\in U$, the map $f_b: x\mapsto \sum_{i = 1}^N [x,q_i(\vec b_i)]$ is an element of $B$ and by Claim \ref{claim:interpretationreduction2}, for any $h\in B$ there exists $b\in U$ such that $h\sim f_b$. We extend $\sim$ on elements of $U$ by setting $b\sim c$ if and only if $f_b\sim f_c$. Of course, the relation $\sim$ on $U$ is definable using the parameter $a$ by the following formula:
    \[b\sim c \iff \sum_{i = 1}^N [a,q_i(b_i)] = \sum_{i = 1}^N [a, q_i(c_i)]\]

    Let $V = U/\sim$. By the above, the map $b/\sim \to f_b/\sim$ is a bijection between $V$ and $B_a$. The addition on $V$ is defined as follows: from $b,c\in U$, the map $f_b+f_c$ is an element of $B$ hence by Claim \ref{claim:interpretationreduction2} there exists $d\in U$ such that $f_b+f_c \sim f_d$, so we define the sum $(b/\sim)+(c/\sim)$ to be $d/\sim\in V$.

    Similarly, the product of $b/\sim$ and $c/\sim$ is defined to be $d/\sim$ where $d\in U$ is such that $f_b\circ f_c\sim f_d$. Let us check that this product is well-defined. Assume that $b\sim b'$ and $c\sim c'$, that $d$ is such that $f_d\sim f_b\circ f_c$ and $d'$ is such that $f_{d'}\sim f_{b'}\circ f_{c'}$. We need to check that $d\sim d'$, i.e. $f_d\sim f_{d'}$. We compute
    \begin{align*}
        f_d(a) &= f_b\circ f_c(a)\\
        &= f_b(f_c(a))\\
        &= f_b(f_{c'}(a))\\
        &= f_{c'}(f_b(a))\\
        &= f_{c'}(f_{b'}(a))\\
        &= f_{b'}(f_{c'}(a))\\
        &= f_{d'}(a).
    \end{align*}
    Once again, the fact that $f\circ g(a) = g\circ f(a)$ for all $f,g\in B$ is crucial.
\end{proof}

We deduce an analog for Lie algebras of a result of Wilson, originally proved for $\omega$-categorical groups \cite{wilson1981}. The proof is in the same spirit as Wilson's original proof. In a Lie algebra $L$, we write $L^{(k)}$ for the $k$-th element of the derived series, defined by $L^{(1)} = L$ and $L^{(k+1)} = [L^{(k)},L^{(k)}]$. We use the following Lie algebra analog of a classical theorem of P. Hall. 

\begin{fact}\label{fact:chao}
    \cite[Theorem 2]{Chao1968} Let $L$ be a Lie algebra and $N$ a nilpotent ideal of $L$. Then $L$ is nilpotent if and only if $L/N^{(2)}$ is nilpotent.
\end{fact}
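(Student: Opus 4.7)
The forward direction is immediate since nilpotency is inherited by quotients. For the converse, suppose $N$ is nilpotent of class $d$, so $\gamma_{d+1}(N)=0$ (lower central series), and $L/N^{(2)}$ is nilpotent of class $c$, so $\gamma_{c+1}(L)\subseteq N^{(2)}=\gamma_2(N)$. The plan is to establish, by induction on $k\geq 2$, the refined inclusion
\[
\gamma_{f(k)}(L)\subseteq \gamma_k(N), \qquad \text{where } f(2)=c+1 \text{ and } f(k+1)=f(k)+k(c-1)+1.
\]
Applying this at $k=d+1$ then yields $\gamma_{f(d+1)}(L)\subseteq \gamma_{d+1}(N)=0$, so $L$ is nilpotent.

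The base case $k=2$ is the hypothesis. For the inductive step, every element of $\gamma_{f(k+1)}(L)$ is a sum of left-normed brackets $[z,y_1,\ldots,y_{k(c-1)+1}]$ with $z\in\gamma_{f(k)}(L)$ and $y_i\in L$. By the inductive hypothesis $z\in\gamma_k(N)$, so by linearity one reduces to the case $z=[x_1,\ldots,x_k]$ with $x_i\in N$. Iteratively applying the Leibniz rule for the derivations $\ad(y_j)$ to the $k$-fold outer bracket expands this element as a finite sum of terms $[\bar x_1,\ldots,\bar x_k]$, where each $\bar x_i=[x_i,y_{j_1},\ldots,y_{j_{n_i}}]$ is the bracket of $x_i$ with the subsequence of $y$'s assigned to slot $i$ (in the order they were applied), and $(n_1,\ldots,n_k)$ ranges over compositions of $k(c-1)+1$ into $k$ nonnegative parts.

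The combinatorial heart is a pigeonhole: since $\sum_i n_i = k(c-1)+1 > k(c-1)$, at least one $n_i\geq c$, which places $\bar x_i\in\gamma_{n_i+1}(L)\cap N\subseteq \gamma_{c+1}(L)\cap N\subseteq N^{(2)}=\gamma_2(N)$ by the base hypothesis. The remaining $\bar x_j$ stay in $N$ since $N$ is an ideal. Hence each term $[\bar x_1,\ldots,\bar x_k]$ has one entry in $\gamma_2(N)$ and the others in $N=\gamma_1(N)$, so it lies in $\gamma_{k+1}(N)$ by the standard inclusion $[\gamma_{a_1}(N),\ldots,\gamma_{a_k}(N)]\subseteq \gamma_{a_1+\cdots+a_k}(N)$. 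Summing over all terms gives $\gamma_{f(k+1)}(L)\subseteq\gamma_{k+1}(N)$, closing the induction.

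The main obstacle is the bookkeeping of the Leibniz expansion of iterated adjoints and the calibration of the threshold $k(c-1)+1$: this is the smallest exponent guaranteeing that no distribution of the $y_j$'s among the $k$ slots leaves every slot with fewer than $c$ entries — precisely what is needed to activate the base hypothesis $\gamma_{c+1}(L)\subseteq N^{(2)}$ at some slot. With a smaller exponent one can always find a distribution with all $n_i\leq c-1$, and such a term only returns to $\gamma_k(N)$, losing the inductive gain. Once this pigeonhole is in place, the induction produces an explicit polynomial bound on the nilpotency class of $L$ in terms of $c$ and $d$.
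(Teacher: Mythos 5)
Your argument is correct. Note that the paper does not prove this statement at all: it is quoted as a Fact with a citation to Chao's 1968 paper (it is the Lie-algebra analogue of P.~Hall's nilpotency criterion for groups), so there is no in-paper proof to compare against. Your proof is a sound, self-contained stability argument: the reduction of $\gamma_{f(k+1)}(L)$ to left-normed brackets $[z,y_1,\ldots,y_{k(c-1)+1}]$ with $z\in\gamma_{f(k)}(L)$, the Leibniz expansion distributing the $y_j$'s among the $k$ slots of $z=[x_1,\ldots,x_k]$, the pigeonhole forcing some slot to absorb at least $c$ of the $y_j$'s and hence land in $\gamma_{c+1}(L)\subseteq N^{(2)}=\gamma_2(N)$, and the graded inclusion $[\gamma_{a_1}(N),\ldots,\gamma_{a_k}(N)]\subseteq\gamma_{a_1+\cdots+a_k}(N)$ are all correctly deployed, and the recursion for $f$ is calibrated exactly right. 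The only degenerate cases ($c\le 1$ or $d\le 1$, where $k(c-1)+1$ or the starting index of the induction behaves oddly) are trivially true on their own, so they do not affect the argument; as a bonus, your proof yields an explicit polynomial bound on the class of $L$ in terms of $c$ and $d$, which the bare citation does not record.
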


\begin{corollary}[Wilson's Theorem]\label{cor:wilsontheorem}
    Let $L$ be an $\omega$-categorical Engel Lie algebra. Then $L$ is solvable if and only if $L$ is nilpotent.
\end{corollary}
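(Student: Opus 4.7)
The ``only if'' direction is immediate. For the converse I would proceed in two steps, starting with the metabelian case ($L'' = 0$). By $\omega$-categoricity and orbit-counting on pairs, the Engel condition upgrades to a uniform $n$-Engel identity, so every $\ad(x) \in A(L)$ is nilpotent of index at most $n$. Fix $a \in L'$. A short Jacobi computation shows that for generators $\ad(x), \ad(y)$ and any $b \in L'$,
\[
[\ad(x), \ad(y)](b) = [[b,y],x] - [[b,x],y] = -[b, [x,y]] = 0,
\]
since $[x,y] \in L'$ and $L'$ is abelian. Hence the subalgebra $B := A(L) \seq \End(L)$ acts commutatively on the $B$-orbit of $a$, so Theorem \ref{thm:interpretationBa} applies with $\cF = \{x_1\}$ and $B_a$ is interpretable in $L$, hence $\omega$-categorical.

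By construction $B_a$ is commutative; each generator $\widetilde{\ad(x)}$ is nilpotent of index at most $n$, and every element of $B_a$ is a polynomial in finitely many such commuting nilpotents, so $B_a$ is a nilring. Cherlin's theorem (Fact \ref{fact:cherlinomegacatnilringarenilpotent}) then yields some $N(a) \in \N$ with $B_a^{N(a)} = 0$, equivalently $[a, x_1, \ldots, x_{N(a)}] = 0$ for all $x_i \in L$. The main delicate point is to obtain a bound uniform in $a$: since $N(a)$ is $\Aut(L)$-invariant and $\omega$-categoricity provides only finitely many orbits on $L'$, taking the maximum gives an $N$ with $[L', L^N] = 0$, so $L$ is nilpotent.

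For general solvable $L$, I would induct on the smallest $k$ with $L^{(k+1)} = 0$ (paper convention $L^{(1)} = L$). The case $k = 1$ (abelian) is trivial and $k = 2$ (metabelian) is handled above. For $k \geq 3$, set $N := L^{(k-1)}$; then $N^{(2)} = L^{(k)}$ and $N^{(3)} = L^{(k+1)} = 0$, so $N$ is metabelian. As an $\Aut(L)$-invariant ideal, $N$ is definable in $L$, hence an $\omega$-categorical Engel Lie algebra, hence nilpotent by the metabelian case. Meanwhile $L/N^{(2)} = L/L^{(k)}$ is $\omega$-categorical, Engel, and has strictly shorter derived series, so it is nilpotent by the inductive hypothesis. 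Fact \ref{fact:chao} (Chao's theorem) applied to the nilpotent ideal $N$ then gives the nilpotency of $L$, completing the induction.
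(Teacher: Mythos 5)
Your proof is correct and follows essentially the same route as the paper: reduce by induction on derived length via Chao's theorem (Fact \ref{fact:chao}) to the metabelian case, then for $a \in L'$ observe that $A(L)$ acts commutatively at $a$, interpret the commutative algebra $B_a$ via Theorem \ref{thm:interpretationBa}, and conclude by Cherlin's theorem. Your two refinements --- deducing that $B_a$ is nil from commuting nilpotent generators rather than from uniform local nilpotency, and spelling out the orbit argument that makes the bound $N(a)$ uniform in $a$ --- are welcome but do not change the argument.
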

\begin{proof}
Only the left to right direction needs a proof. Assume that $L \supsetneq L^{(2)}\supsetneq L^{(3)}\ldots \supsetneq L^{(s)} = 0$. 
        Using Fact \ref{fact:chao}, by induction on the derived length, it is enough to prove that $L^{(s-2)}$ is nilpotent. So we may simply assume that $L^{(3)} = 0$ i.e. $ L^{(2)}$ is abelian.
 Let $a\in L'$, and $b,c\in L$, we have 
        \[[a,b,c] = [a,c,b]+[a,[b,c]] = [a,c,b]\]
in other words, for all $b,c\in L$ we have $\ad(b)\circ \ad(c) (a) = \ad(c)\circ \ad(b)(a)$. It follows that for $B = A(L)$, we have $f\circ g(a) = g\circ f(a)$ for all $f,g\in B$, and $B_a$ is a commutative associative algebra. It follows from Theorem \ref{thm:interpretationBa} that $B_a$ is $\omega$-categorical. As $L$ is uniformly locally nilpotent, $B$ and hence $B_a$ are nilrings so that $B_a$ is nilpotent by Cherlin's Theorem (Fact \ref{fact:cherlinomegacatnilringarenilpotent}).
 Then there exists $m\in \N$ such that $[a,b_1,\ldots,b_m] = 0$ for all $b_1,\ldots,b_m\in L$. As $a$ is arbitrary in $L'$ and for all $c_1,c_2$ we have $a = [c_1,c_2]\in L'$ we conclude that $L$ is $m+1$-nilpotent.
\end{proof}
The latter result is extremely useful as it allows to do inductions on series. The next corollary exemplifies this phenomenon.

\begin{remark}
    Corollary \ref{cor:wilsontheorem} is also true when the assumption ``$\omega$-categorical" is replaces by ``$n$-Engel Lie algebras over a field of characteristic $p$ with $n<p$". This is a classical result of Higgins \cite{higginsEngel}. 
\end{remark}

\begin{definition}
    In a Lie algebra $L$, an ideal is \textit{characteristic} if it is closed under automorphisms of $L$. We call a Lie algebra \textit{characteristically simple} if there are no nontrivial characteristic ideals in $L$. 
\end{definition}

\begin{remark}
    The reader should be careful that this is \textit{not} a standard terminology. In the literature on Lie algebras, a characteristic ideal is one which is closed under all derivations. Here we draw a direct parallel with the notion of a characteristic subgroup.
\end{remark}

\begin{corollary}
    Assume that every $\omega$-categorical characteristically simple Engel Lie algebra is nilpotent. Then every $\omega$-categorical Engel Lie algebra is nilpotent. 
\end{corollary}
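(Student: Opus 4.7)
The plan is to induct on $n$, the number of orbits of $\Aut(L)$ on $L$, which is finite by $\omega$-categoricity. Since every characteristic ideal is an $\Aut(L)$-invariant subset of $L$, hence a union of such orbits, the lattice of characteristic ideals of $L$ is finite; in particular, maximal proper characteristic ideals exist whenever $L$ is not itself characteristically simple. At each step of the induction, if $L$ is characteristically simple, the hypothesis applies directly; otherwise, I pick a maximal proper characteristic ideal $M$ of $L$ and argue that both $L/M$ and $M$ are nilpotent, concluding via Corollary \ref{cor:wilsontheorem}.

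First, I would check that $L/M$ is characteristically simple. If $\bar N$ were a nontrivial proper characteristic ideal of $L/M$, then its preimage under the projection $L \to L/M$ would be a characteristic ideal of $L$ strictly between $M$ and $L$ (it is characteristic because every $\sigma \in \Aut(L)$ descends to an automorphism of $L/M$, as $M$ is $\Aut(L)$-invariant), contradicting the maximality of $M$. Since $M$ is $\emptyset$-definable in $L$, the quotient $L/M$ is again an $\omega$-categorical Engel Lie algebra, and the hypothesis yields that $L/M$ is nilpotent.

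Next, I would show $M$ is nilpotent by invoking the inductive hypothesis, which requires verifying that $M$ has strictly fewer orbits under $\Aut(M)$ than $L$ has under $\Aut(L)$. Since $M$ is a proper union of $\Aut(L)$-orbits in $L$, at most $n-1$ orbits of $\Aut(L)$ lie in $M$, so the restricted action $\Aut(L)|_M$ has at most $n-1$ orbits on $M$. As $\Aut(M) \supseteq \Aut(L)|_M$, the partition of $M$ under $\Aut(M)$ is coarser, still with at most $n-1$ orbits. Since $M$ is $\omega$-categorical (being $\emptyset$-definable in $L$) and Engel, the inductive hypothesis applies and gives $M$ nilpotent.

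Finally, $M$ and $L/M$ being both nilpotent forces $L$ to be solvable (if $(L/M)^{(j)} = 0$ then $L^{(j)} \subseteq M$, and if $M^{(k)} = 0$ then $L^{(j+k-1)} = 0$). By Corollary \ref{cor:wilsontheorem}, a solvable $\omega$-categorical Engel Lie algebra is nilpotent, so $L$ itself is nilpotent and the induction closes. I do not expect a significant obstacle here; the only point demanding care is the monotonicity of the orbit count when passing from $L$ to $M$, which rests on the inclusion $\Aut(L)|_M \subseteq \Aut(M)$ and the fact that $M$, as a union of whole $\Aut(L)$-orbits of $L$, inherits a coarser orbit structure than one might naively expect.
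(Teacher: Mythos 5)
Your proof is correct and follows essentially the same route as the paper: both exploit the finiteness of the set of characteristic ideals granted by $\omega$-categoricity, apply the hypothesis to characteristically simple quotients, and close the argument with Corollary \ref{cor:wilsontheorem}. The only difference is organizational --- the paper fixes a maximal chain of characteristic ideals at the outset and inducts up the chain, whereas you peel off one maximal characteristic ideal at a time and induct on the number of $\Aut(L)$-orbits, with the inclusion $\Aut(L)|_M \subseteq \Aut(M)$ correctly securing the descent.
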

\begin{proof}
    Assume the premises and let $L$ be any $\omega$-categorical Engel Lie algebra. Any characteristic ideal is a union of orbits under the automorphism group, hence there are only finitely many of those, and let 
    \[L = I_1\supsetneq I_2\supsetneq \ldots \supsetneq I_{s+1} = 0\]
    be a maximal chain. Each successive quotient $I_i/I_{i+1}$ is characteristically simple and Engel, hence nilpotent by hypothesis. As $I_{s}$ is nilpotent and $I_{s-1}/I_s$ is nilpotent, we have that $I_{s-1}$ is solvable, hence by Wilson's theorem (Corollary \ref{cor:wilsontheorem}) $I_{s-1}$ is nilpotent. An immediate induction yields that $L$ is nilpotent.
\end{proof}

\section{Solution for $n = 4$ and $p = 3$}

\subsection{Toastie and $n$-strong algebras} Following Kostrikin, a \textit{sandwich} of a Lie algebra $L$ is an element $c$ satisfying the following two equalities
\[[ac^2] = 0\text{ and }[acbc] = 0 \]
for all $a,b\in L$. If the ambient characteristic is not $2$ then the first equality implies the second. The \textit{thickness} of a sandwich $c$ is the smallest number $k$ such that for some $a,b_1,\ldots,b_{k+1}$ we have 
\[[acb_1\ldots b_{k+1}c]\neq 0\]
If there is no such $k$ the sandwich $c$ is called \textit{of infinite thickness}. It is easy to see that $c$ is a sandwich of infinite thickness if and only if the ideal $I(c)$ is abelian. Sandwiches of infinite thickness will be the main interest of this section, and we will prefer the shorter and more conceptual name of \textit{toasties}\footnote{Credits to Dugald Macpherson for that name. My question was: what is like a sandwich, but better? His answer was spontaneous.}.
\begin{definition}
    An element $c\in L$ that generates an abelian ideal is called a \textit{toastie}. A Lie algebra generated by toasties will be called a \textit{toastie algebra}.
\end{definition}
It is easily checked that the bracket of two toasties is again a toastie and the scalar multiple of a toastie is again a toastie. In a toastie algebra $L$ we denote by $S = S(L)$ the set of toasties of $L$, we have
\[L = \vect{S(L)} = \Span(S(L)).\]
In other words, in a toastie algebra, every element is a sum of toasties. It is immediate then that toastie algebras are locally nilpotent. Note that if  $[S(L),L]\seq S(L)$, the sum of two toasties is not a toastie in general. 
\begin{lemma}\label{lm:toastielinearspanengel}
    Let $L$ be a toastie algebra and $m\in \N$. Then the set $\Span([ab^m]\mid a,b\in L)$ is an ideal of $L$.
\end{lemma}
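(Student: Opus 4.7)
The plan is to show that $V := \Span\{[ab^m] \mid a,b \in L\}$ coincides with
\[
W := \Span\{\tilde p(a, y_1, \ldots, y_m) \mid a, y_1, \ldots, y_m \in L\},
\]
the span of values of the multilinear linearization
\[
\tilde p(x, y_1, \ldots, y_m) := \sum_{\sigma \in \fS_m} [x, y_{\sigma(1)}, \ldots, y_{\sigma(m)}].
\]
Once $V = W$ is established the conclusion is immediate: $W$ is an ideal in any Lie algebra, since $\tilde p$ is a multilinear Lie polynomial and $\ad(c)$ is a derivation of the bracket, yielding
\[
[\tilde p(a, y_1, \ldots, y_m), c] = \tilde p([a,c], y_1, \ldots, y_m) + \sum_{i=1}^m \tilde p(a, \ldots, [y_i,c], \ldots, y_m) \in W.
\]

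The core step is the toastie identity
\[
\bigl[\, a \bigl(y_1 + \cdots + y_m\bigr)^m \,\bigr] = \tilde p(a, y_1, \ldots, y_m) \qquad \text{for all } a \in L,\ y_1, \ldots, y_m \in S(L).
\]
I would prove this by expanding the left-hand side as $\sum_{(i_1, \ldots, i_m) \in [m]^m} [a, y_{i_1}, \ldots, y_{i_m}]$ and using the defining property of toasties: whenever a word has two positions occupied by the same toastie $y_j$ (or by proportional toasties), the sub-bracket up to the first such position lies in the abelian ideal $I(y_j)$, and bracketing further through intermediate letters stays inside $I(y_j)$, so the second occurrence of $y_j$ brackets to zero. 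Only the $m!$ permutation terms survive, and they reassemble into $\tilde p(a, y_1, \ldots, y_m)$.

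From the identity both inclusions follow. For $V \subseteq W$: using $L = \Span(S(L))$, write $b = \sum_{i=1}^k c_i$ with the $c_i$ pairwise non-proportional toasties (first combining scalar multiples of proportional ones), and expand $[ab^m]$ using the same toastie-vanishing — the survivors pack into $\sum_{I \subseteq [k],\, |I|=m} \tilde p(a, (c_i)_{i \in I}) \in W$, where the sum is empty if $k < m$. For $W \subseteq V$: by multilinearity of $\tilde p$ and $L = \Span(S(L))$ it suffices to treat $\tilde p(a, c_1, \ldots, c_m)$ with all $c_j$ toasties, and the identity above rewrites this as $[a(c_1 + \cdots + c_m)^m] \in V$. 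The main point requiring care is the bookkeeping for proportional toasties and repeated indices in the expansion, but this is handled uniformly by the observation that $I(c)$ being abelian kills any bracket with $c$ occurring at two positions of the word, no matter how far apart.
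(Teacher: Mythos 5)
Your proof is correct, and it organizes the argument differently from the paper. The shared core is the expansion $[a(y_1+\cdots+y_m)^m]=\sum_{\sigma\in\fS_m}[a,y_{\sigma(1)},\ldots,y_{\sigma(m)}]$ for toasties $y_i$ (this is exactly the observation $(\star)$ in the paper's proof, justified the same way: a word in which some toastie occurs at two positions vanishes because the abelian ideal it generates absorbs everything in between). From there the paper proceeds directly: it proves by an iterated Jacobi computation that $[a(c_1+\cdots+c_s)^m d]$ equals $[ad(c_1+\cdots+c_s)^m]$ plus the terms $[a(c_1+\cdots+[c_i,d]+\cdots+c_s)^m]$, each visibly of the form $[a'(b')^m]$, so the span is closed under bracketing by $d$. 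You instead identify $\Span([ab^m])$ with the span $W$ of all values of the full linearization $\tilde p$ and then invoke the standard fact that the span of values of a multilinear Lie polynomial is an ideal. The two routes do the same Jacobi/derivation computation in different clothing, but yours isolates the cleaner intermediate statement $V=W$ (the equivalence of an identity with its linearization at the level of spans), which is very much in the spirit of Lemma \ref{lm:linearizationEngeltoastie} and of Remark \ref{rk:moretedious}, and it makes the ideal property a formal consequence rather than a computation. One small remark: your precaution about first merging proportional toasties is unnecessary — a surviving tuple whose entries include two proportional toasties vanishes anyway, and the corresponding $\tilde p(a,(c_i)_{i\in I})$ is then zero as well, so the grouping into linearized terms is valid for an arbitrary decomposition of $b$ into toasties.
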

\begin{proof}
We start with a claim.
    \begin{claim}
        Let $a,d\in L$ and $c_1,\ldots,c_s\in S(L)$. Then \[[a(c_1+\ldots+c_s)^m d] = [ad(c_1+\ldots+c_s)^m]+[a([c_1,d]+c_2+\ldots+c_s)^m]+\ldots+[a(c_1+\ldots+[c_s,d])^m]\]
    \end{claim}
    \begin{proof}[Proof of the Claim]
    Observe first that in $A(L)$ 
 and for any toasties $c_1,\ldots c_s$ we have
    \[(c_1+\ldots+c_s)^m = \sum_{1\leq i_1\neq \ldots \neq i_m\leq s}c_{i_1}\cdots c_{i_m}\quad \quad (\star)\]
    where the sum is taken over every tuple $(i_1,\ldots,i_m)$ of distinct elements of $\set{1,\ldots,s}$. It follows that 
    \[(c_1+\ldots+c_s)^md = \sum_{1\leq i_1\neq \ldots \neq i_m\leq s}c_{i_1}\ldots c_{i_m} d.\]
    Fix $(i_1,\ldots,i_m)$. Iterating the use of the Jacobi identity, we obtain 
    \begin{align*}
        c_{i_1}\cdots c_{i_m} d &= c_{i_1}\cdots c_{i_{m-1}} dc_{i_m} + c_{i_1}\cdots c_{i_{m-1}} [c_{i_m}d]\\
        &= c_{i_1}\cdots c_{i_{m-2}} dc_{i_{m-1}}c_{i_m}+ c_{i_1}\cdots c_{i_{m-2}} [c_{i_{m-1}}d]c_{i_m}+c_{i_1}\cdots c_{i_{m-1}} [c_{i_m}d]\\
        &\vdots\\
        &= dc_{i_1}\cdots c_{i_m}+ [c_{i_1}d]c_{i_2}\cdots c_{i_m}+c_{i_1}[c_{i_2}d]c_{i_3}\cdots c_{i_m}+\ldots + c_{i_1}\cdots [c_{i_m}d]
    \end{align*}
    Then 
    \begin{align*}
        \sum_{1\leq i_1\neq \ldots \neq i_m\leq s}c_{i_1}\ldots c_{i_m} d = &\sum_{1\leq i_1\neq \ldots \neq i_m\leq s}dc_{i_1}\cdots c_{i_m}\\
        &+ \underbrace{\sum_{1\leq i_1\neq \ldots \neq i_m\leq s} [c_{i_1}d]c_{i_2}\cdots c_{i_m}+c_{i_1}[c_{i_2}d]c_{i_3}\cdots c_{i_m}+\ldots + c_{i_1}\cdots [c_{i_m}d]}_{= B}.
    \end{align*}
    By $(\star)$, we know that  $\sum_{1\leq i_1\neq \ldots \neq i_m\leq s}dc_{i_1}\cdots c_{i_m} = d(c_1+\ldots+c_s)^m$. We turn to $B$.
    For each $1\leq i\leq s$ we write $d_i = [c_i,d]$. Picking up all summands of $B$ which involve the element $d_1 = [c_1d]$, we easily see that every monomial of multiweight $(1,\ldots,1)$ in $d_1,c_{i_2},\ldots c_{i_m}$ for $1<i_2\neq \ldots \neq i_m\leq s$ appear once and only once in $B$. The sum of all those equals $(d_1+c_2+\ldots +c_s)^m$, using again $(\star)$. Similarly, every monomial of multiweight $(1,\ldots,1)$ in $c_{i_1},d_{2},\ldots c_{i_m}$ for $1\leq i_1\neq \ldots \neq i_m\leq s$ and $i_j\neq 2$ appears once and only once, and the sum of all those equals $(c_1+d_2+c_3\ldots +c_s)^m$. Iterating, we see that 
    \[B = (d_1+c_2+\ldots +c_s)^m+(c_1+d_2+c_3+\ldots +c_s)^m+\ldots +(c_1+\ldots c_{s-1}+d_s)^m\]
    and we conclude the claim.
    \end{proof}
Let $I = \Span([ab^m]\mid a,b\in L)$. To conclude, observe that it suffices to show that for all $a,b,d\in L$ we have $[ab^md]\in I$. Write $b = c_1+\ldots+c_s$ with $c_i\in S(L)$ and $b_i = c_1+\ldots+[c_i,d]+\ldots+c_s$. We have $[adb^m]\in I$ and $[ab_i^m]\in I$ for each $1\leq i\leq s$. By the claim, $[ab^md]\in I$.
\end{proof}

\begin{remark}\label{rk:moretedious}
    A similar but more tedious proof yields that in a toastie algebra $L$, if $p(x_1,\ldots,x_s)$ is a monomial, then the set $\Span([ap(b_1,\ldots,b_s)]\mid a,b_1,\ldots,b_s\in L)$ is an ideal of $L$.
\end{remark}

Let $L$ be an $\omega$-categorical toastie algebra. By considering the family of sets \[U_k = \set{a\in L\mid a = b_1+\ldots+b_k\text{ for some $b_1,\ldots,b_k \in S(L)$}}\]
there exists some $n\in \N$ such that every element of $L$ is a sum of $n$ toasties (note that $0$ is a toastie), which implies that for each $a\in L$ we have $I(a)^{n+1} = 0$. This motivates the following definition.

\begin{definition}
    A Lie algebra is $n$-strong if $I(a)^n = 0$ for each $a\in L$.
\end{definition}

\begin{lemma}\label{lm:linearizationEngeltoastie}
    Let $L$ be a toastie algebra, and let $S$ be a set of toasties generating $L$ (e.g. $S = S(L)$). Then 
    \begin{itemize}
        \item $L$ is $n$-Engel if and only if $\sum_{\sigma\in \fS_n} c_{\sigma(1)}\cdots c_{\sigma(n)} = 0$ for all $c_1,\ldots,c_n\in S$.
        \item $L$ is $n$-strong if and only if $\sum_{\sigma\in \fS_n} c_{\sigma(1)} w_1c_{\sigma(2)} \cdots c_{\sigma(n-1)} w_{n-1}c_{\sigma(n)} = 0$ for all $c_1,\ldots,c_n\in S$ and products of Lie elements $ w_1,\ldots, w_{n-1}\in A(L)$.
    \end{itemize}
\end{lemma}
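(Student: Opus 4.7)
The plan is to extend the expansion argument from the proof of Lemma~\ref{lm:toastielinearspanengel} via a \emph{sandwich property}: if $c\in S$ is a toastie and $f\in A(L)$ is any element, then $cfc=0$ in $A(L)$. This follows exactly as in that proof: for any $z\in L$, $[z,c]\in I(c)$; then $f([z,c])\in I(c)$ because $I(c)$ is $A(L)$-invariant by the ideal property; and the final $\ad(c)$ lands in $[I(c),I(c)]=0$ by abelianness of $I(c)$. Iterating this across a long product makes every term with a repeated toastie index collapse to zero, yielding the key operator identity: for $y=c_1+\cdots+c_s\in L$ a sum of toasties in $S$ and $w_1,\ldots,w_{n-1}\in A(L)$,
\[
y\,w_1\,y\,w_2\cdots w_{n-1}\,y \;=\; \sum_{(i_1,\ldots,i_n)\text{ pairwise distinct}} c_{i_1}w_1c_{i_2}\cdots w_{n-1}c_{i_n} \quad \text{in } A(L).
\]

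For the Engel statement, I would use that $L$ is $n$-Engel exactly when $y^n=0$ in $A(L)$ for every $y\in L$. Since every element of the toastie algebra $L$ is a sum of toasties, applying the key identity with all $w_i$'s suppressed and regrouping pairwise-distinct $n$-tuples by their underlying $n$-subset of indices converts $y^n=0$ (for every $y$) into the requirement that $\sum_{\sigma\in\fS_n}c_{\sigma(1)}\cdots c_{\sigma(n)}=0$ for every $n$-tuple of toasties in $S$, which is the first item.

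For the strong statement, I would first prove the characterization that $L$ is $n$-strong if and only if $\ad(y)w_1\ad(y)\cdots w_{n-1}\ad(y)=0$ in $A(L)$ for all $y\in L$ and all products $w_i$ of Lie elements. Granted this, the same regrouping argument applied to the key operator identity converts the operator identity for sums of toasties into the desired linearized form $\sum_{\sigma\in\fS_n}c_{\sigma(1)}w_1\cdots c_{\sigma(n)}=0$. To prove the characterization, I would use that every $b\in I(y)$ has the form $g(y)$ where $g$ is a sum of products of adjoints (plus possibly a scalar multiple of the identity), and that $\ad(g(y))$ expands in $A(L)$ as a linear combination of products $u\,\ad(y)\,v$ by iterating $\ad([u,v])=[\ad(u),\ad(v)]$. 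An iterated Lie bracket $[b_1,\ldots,b_n]=\ad(b_n)\cdots\ad(b_2)(b_1)$ then expands into operator-words with exactly $n$ copies of $\ad(y)$ acting on $y$, and the vanishing of all such brackets matches the operator identity by varying the $b_i$'s appropriately. The main obstacle is the reverse direction of this characterization: from the ambient operator identity one must also extract the element identities produced when some $b_i$ is proportional to $y$ (e.g.\ $[y,a,y]=0$ rather than $[z,y,a,y]=0$ for arbitrary $z$), which will likely require a linearization step analogous to Remark~\ref{rk:moretedious} and may rest on characteristic $\neq 2$, as in Theorem~\ref{thm:4strongLAnilpotent}.
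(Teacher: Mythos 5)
Your overall strategy is the same as the paper's: use the sandwich property $c\,f\,c = 0$ for $c$ a toastie and $f\in A(L)$, expand powers and interleaved products of sums of toasties into sums over pairwise distinct index tuples, and regroup those tuples by their underlying $n$-subset to get the symmetrized form. The first bullet ($n$-Engel) is handled essentially as in the paper.

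For the second bullet, however, the characterization you propose does not quite match the one that makes the argument work, and that mismatch is exactly the source of the "obstacle" you flag. You characterize $n$-strong via the \emph{operator} identity $\ad(y)w_1\ad(y)\cdots w_{n-1}\ad(y)=0$ in $A(L)$, which when applied to a seed $z$ gives vanishing of $(n{+}1)$-entry brackets $[z\,y\,w_1\,y\cdots w_{n-1}\,y]$. But the elements of $I(y)^n$ that you need to kill are $n$-entry brackets $[y\,W_1\,y\,W_2\cdots y\,W_n]$ with $y$ itself in the first slot, and your operator identity has one $\ad(y)$ too many: you cannot extract these by setting $z=y$ (that collapses to $[[y,y],\ldots]=0$, a triviality). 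The paper avoids this entirely by working at the level of \emph{elements} with $y$ in the first bracket slot: $L$ is $n$-strong iff $[y\,w_1\,y\cdots w_{n-1}\,y]=0$ in $L$ for all (possibly empty) Lie-products $w_i$, with $y$ appearing $n$ times total. The forward direction is the step-count $[y\,w_1]\in I(y)$, $[y\,w_1\,y]\in I(y)^2$, \dots, $[y\,w_1\,y\cdots w_{n-1}\,y]\in I(y)^n=0$; the reverse direction expands $\ad([y\,u_i])$ as $\pm$-sums of $v\,\ad(y)\,v'$ so that every $[b_1,\ldots,b_n]$ with $b_i\in I(y)$ reduces to this form. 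With $y$ as seed, the expansion $[b\,w_1\,b\cdots w_{n-1}\,b]=\sum_{\text{distinct}}[c_{i_1}\,w_1\,c_{i_2}\cdots w_{n-1}\,c_{i_n}]$ then concludes directly, so there is no linearization step and no restriction on the characteristic anywhere in this lemma; your final worry about a residual case and a possible characteristic assumption is unfounded once the correct characterization is used.
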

\begin{proof}
If $L$ is $n$-Engel, let $c_1,\ldots,c_n\in S$, then $0 = (c_1+\ldots +c_n)^n = \sum_{\sigma\in \fS_n} c_{\sigma(1)}\cdots c_{\sigma(n)}$, so we prove the converse. Assume that $\sum_{\sigma\in \fS_n} c_{\sigma(1)}\cdots c_{\sigma(n)} = 0$ for all $c_i\in S$, and let $b\in L$. By hypothesis, there exists $c_1,\ldots,c_s\in S$ such that $b = c_1+\ldots+c_s$. We conclude 
    \[b^n = (c_1+\ldots+c_s)^n = \sum_{1\leq i_1\neq \ldots \neq i_n\leq s}c_{i_1}\cdots c_{i_n} = \sum_{\set{d_1,\ldots,d_n}\seq \set{c_1,\ldots,c_s}} \sum_{\sigma\in \fS_n} d_{\sigma(1)}\cdots d_{\sigma(n)} = 0.\]
For the second bullet, the reasoning is similar as for the first bullet, only more tedious to write down. Fix $c_1,\ldots,c_n\in S$ and products of Lie elements $ w_1,\ldots, w_{n-1}$ . As $I(c_1+\ldots+c_n)^n = 0$ and $(c_1+\ldots+c_n) w_i\in I(c_1+\ldots+c_n)$ for each $i$, we conclude that 
\[0 = (c_1+\ldots+c_n) w_1(c_1+\ldots+c_n) w_2\cdots w_{n-1}(c_1+\ldots+c_n) = \sum_{\sigma\in \fS_n} c_{\sigma(1)} w_1c_{\sigma(2)} \cdots c_{\sigma(n-1)}  w_{n-1}c_{\sigma(n)}\]
Conversely, assume that $\sum_{\sigma\in \fS_n} c_{\sigma(1)}  w_1c_{\sigma(2)} \cdots c_{\sigma(n-1)}  w_{n-1}c_{\sigma(n)} = 0$ for all $c_1,\ldots,c_n\in S$, and products of Lie elements $w_1,\ldots,  w_{n-1}\in A(L)$. Let $b = c_1+\ldots+c_s$ with $c_i\in S$. In order to prove that $I(b)^n = 0$, it is enough to prove that for all products of Lie elements $  w_1,\ldots,  w_{n-1}\in A(L)$, we have $[b  w_1 b\ldots   w_{n-1}b] = 0$. Similarly as above:
\begin{align*}
    [b  w_1 b\ldots   w_{n-1}b] &= [(c_1+\ldots+c_s)  w_1 (c_1+\ldots+c_s)\ldots   w_{n-1}(c_1+\ldots+c_s)]\\
    &= \sum_{1\leq i_1\neq \ldots \neq i_n\leq s}[c_{i_1}  w_1c_{i_2}\cdots   w_{n-1}c_{i_n}]\\
    & = \sum_{\set{d_1,\ldots,d_n}\seq \set{c_1,\ldots,c_s}} \sum_{\sigma\in \fS_n} [d_{\sigma(1)}  w_1d_{\sigma(2)}\cdots   w_{n-1}d_{\sigma(n)}]\\
    & = 0
\end{align*}
\end{proof}

\begin{remark}
    Note that $n$-Engel always implies the identity $\sum_{\sigma\in \fS_n} a_{\sigma(1)}\cdots a_{\sigma(n)} = 0$, hence for a toastie algebra, this is equivalent. It is well-known that the $n$-Engel condition is equivalent to the linearized condition $\sum_{\sigma\in \fS_n} a_{\sigma(1)}\cdots a_{\sigma(n)} = 0$ if the characteristic $p$ is greater than $n$, but this is not true in general.
\end{remark}

\begin{lemma}\label{lm:scalarextensiontoastie}
Let $L$ be a Lie algebra over a field $K$ and let $F$ be a field extension of $K$.
    \begin{enumerate}
        \item If $L$ is a toastie algebra, then so is $L\otimes_K F$.
        \item If $L$ is a toastie algebra which is $n$-Engel, then so is $L\otimes_K F$.
        \item If $L$ is an $n$-strong toastie algebra, then so is $L\otimes_K F$.
    \end{enumerate}    
\end{lemma}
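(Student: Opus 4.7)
The plan is to address all three parts with a common reduction: exhibit a generating set of toasties in $L\otimes_K F$ coming from toasties of $L$, then verify the relevant identity on this set using the linearized characterizations of Lemma \ref{lm:linearizationEngeltoastie}. The key preparatory observation is that for any toastie $c\in L$ and any $\alpha\in F$, the element $c\otimes \alpha$ is a toastie of $L\otimes_K F$: indeed $I(c\otimes \alpha)\seq I(c)\otimes_K F$, and the latter is abelian because $I(c)$ is.

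For part (1), any simple tensor $a\otimes \alpha$ of $L\otimes_K F$ decomposes, via a toastie decomposition $a = c_1+\ldots+c_s$ of $a\in L$, as a sum of elements $c_i\otimes \alpha$; since simple tensors span $L\otimes_K F$, the set $T := \set{c\otimes \alpha \mid c\in S(L),\ \alpha\in F}$ generates $L\otimes_K F$ as a toastie algebra.

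For parts (2) and (3) I would first record the identification of associative algebras $A(L\otimes_K F)\cong A(L)\otimes_K F$, under which $\ad(a\otimes \alpha)$ corresponds to $\ad(a)\otimes \alpha$ and multiplication reads $(f\otimes \alpha)(g\otimes \beta) = fg\otimes \alpha\beta$. Applying Lemma \ref{lm:linearizationEngeltoastie} with the generating set $T$, the linearized $n$-Engel identity for $L\otimes_K F$ evaluated on toasties $c_i\otimes \alpha_i\in T$ becomes
$$\sum_{\sigma\in \fS_n}(c_{\sigma(1)}\otimes \alpha_{\sigma(1)})\cdots (c_{\sigma(n)}\otimes \alpha_{\sigma(n)}) = \Bigl(\sum_{\sigma\in \fS_n} c_{\sigma(1)}\cdots c_{\sigma(n)}\Bigr) \otimes (\alpha_1\cdots \alpha_n),$$
where commutativity of $F$ has been used to factor the scalar; the right-hand side vanishes by the $n$-Engel hypothesis on $L$ via Lemma \ref{lm:linearizationEngeltoastie}, which gives (2). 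Part (3) is strictly analogous: by distributivity, any product of Lie elements of $A(L\otimes_K F)$ may be written as an $F$-linear combination of terms $w\otimes \beta$ with $w\in A(L)$ a product of Lie elements, so the same scalar factorization reduces the linearized $n$-strong identity for $L\otimes_K F$ (applied to elements of $T$) to the corresponding identity for $L$.

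I do not anticipate any real obstacle: the whole point of toastie algebras combined with Lemma \ref{lm:linearizationEngeltoastie} is precisely to permit this sort of clean scalar extension without restrictions on the characteristic. The only bookkeeping subtlety lies in justifying the identification $A(L\otimes_K F)\cong A(L)\otimes_K F$ and the multilinearity reduction in part (3), both of which are routine.
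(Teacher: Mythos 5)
Your proof is correct and follows essentially the same route as the paper's: identify the toasties $c\otimes\alpha$ as a generating set of $L\otimes_K F$, then verify the linearized identities of Lemma \ref{lm:linearizationEngeltoastie} on that set by factoring the $F$-scalars out of each summand using commutativity of $F$. The only cosmetic difference is that the paper expands the $w_i$ directly as products of basic tensors rather than invoking the identification $A(L\otimes_K F)\cong A(L)\otimes_K F$, of which only the (evident) surjectivity is needed.
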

\begin{proof}
    Let $\tilde L = L\otimes_K F$. First observe that the set $S = \set{c\otimes f\mid c\in S(L),f\in F}$ is a set of toasties of $\tilde L$. To see this, note that an element of $I(c\otimes f)$ is a sum of elements of the form $[c\otimes f,a_1\otimes f_1,\ldots,a_k\otimes f_k]$ which by definition equals $[ca_1\cdots a_k]\otimes ff_1\cdots f_k$. The bracket of two elements of $I(c\otimes f)$ can be written as a sum of bracket:
    \[[[ca_1\cdots a_k]\otimes ff_1\cdots f_k , [cb_1\cdots b_s]\otimes fg_1\cdots g_s] = [ca_1\cdots a_k[cb_1\cdots b_s]]\otimes ff_1\cdots f_k f g_1\cdots g_s.\]
    If $c$ is a toastie, all of those summands vanish and $I(c\otimes f)$ is abelian.
    
    By definition, every element of $\tilde L$ is a sum of basic tensors $a\otimes f$ with $a\in L$ and $f\in F$. As $L$ is a toastie algebra, $a$ is itself a sum of toasties of $L$ hence every element of $\tilde L$ is a sum of elements of $S$, hence $\tilde L$ is a toastie algebra, which yields $(1)$.

    The proof of $(2)$ is very similar to the one of $(3)$, only easier, hence we leave it as an exercise to the reader.

    We turn to $(3)$. Using Lemma \ref{lm:linearizationEngeltoastie}, it is enough to prove that for all $c_1\otimes f_1,\ldots,c_n\otimes f_n\in S$ and products of Lie elements $ w_1,\ldots, w_{n-1}\in A(\tilde L)$ we have \[\sum_{\sigma\in \fS_n} c_{\sigma(1)}\otimes f_{\sigma(1)}  w_1c_{\sigma(2)} \cdots c_{\sigma(n-1)}\otimes f_{\sigma(n-1)}  w_{n-1}c_{\sigma(n)}\otimes f_{\sigma(n)} = 0.\]
    First, as every element in $\tilde L$ is a sum of basic tensors, we may assume that $  w_i$ are products of basic tensors, i.e. $  w_i = a_{i,1}\otimes f_{i,1}\cdots a_{i,k_i}\otimes f_{i,k_i}$. Then by definition, $c_{\sigma(1)}\otimes f_{\sigma(1)}  w_1c_{\sigma(2)} \cdots c_{\sigma(n-1)}\otimes f_{\sigma(n-1)}  w_{n-1}c_{\sigma(n)}\otimes f_{\sigma(n)}$ equals the following
    \[ (c_{\sigma(1)}a_{1,1}\ldots a_{1,k_1}c_{\sigma(2)}\cdots a_{n-1,1}\ldots a_{n-1,k_{n-1}}c_{\sigma(n)})\otimes(f_{\sigma(1)}f_{1,1}\ldots f_{1,k_1}f_{\sigma(2)}\cdots f_{n-1,1}\ldots f_{n-1,k_{n-1}}f_{\sigma(n)}) \]
The sum of all those terms where $\sigma$ ranges in $\fS_n$ yields $0$ as $L$ is $n$-toastie, by Lemma \ref{lm:linearizationEngeltoastie}.
\end{proof}

\subsection{Identities in n-strong algebras} We now argue that $n$-strong algebras satisfy a form of commutativity. In fact, we propose the following conjectures.

\begin{conjecture}\label{conj:toasties1} Every $n$-strong Lie algebra over a field of size $\geq n$ satisfies 

\begin{enumerate} 
    \item[$(\mathrm{I})$] $a^{n-1}b^{n-1} = (-1)^{n-1}b^{n-1}a^{n-1}$ for all Lie elements $a,b\in A(L)$.
    \item[$(\mathrm{II})$]  $p(a,b) = (-1)^{n-1} p(b,a)$ for all monomial $p(x,y)$ of multiweight $(n-1,n-1)$ and Lie elements $a,b\in A(L)$.
    \item[$(\mathrm{III})$] $p(a_1,\ldots,a_s)$ and $p(a_{\sigma(1)},\ldots,a_{\sigma(s)})$ are $\F_p$-linearly dependent, for all monomials $p(x_1,\ldots, x_s)$ of multiweight $(n-1,n-1,\ldots,n-1)$, and Lie elements $a_i\in A(L)$.
\end{enumerate}
\end{conjecture}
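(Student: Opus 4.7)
The plan is to exploit the fact that the hypothesis $I(u)^n = 0$ holds for \emph{every} Lie element $u \in L$, in particular for $u = \lambda a + \mu b$ as $\lambda,\mu$ range over the base field (which has at least $n$ elements by hypothesis). Expanding $I(\lambda a + \mu b)^n = 0$ in $A(L)$, writing the general member as $u\, w_1\, u\, w_2 \cdots w_{n-1}\, u = 0$ for Lie-element products $w_i$, and collecting by bidegree in $(\lambda,\mu)$, a Vandermonde-style argument separates the bihomogeneous components and yields a list of multilinear identities of the form $\sum c_\sigma\, m_\sigma(a,b) = 0$, where the $m_\sigma$ are associative monomials of fixed bidegree in the letters $a$ and $b$, possibly with iterated Lie brackets appearing inside the sandwiched $w_i$. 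My hope would be that, through judicious choice of the $w_i$ and careful linear combinations, these identities collapse to precisely $a^{n-1}b^{n-1} = (-1)^{n-1} b^{n-1} a^{n-1}$.

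Concretely, I would first set up the symbolic computation: work in the free associative algebra $\F_p\langle a,b\rangle$ enlarged by formal symbols for iterated Lie brackets $[a,b], [a,b,a], \ldots$, and systematically catalogue the ``atomic'' relations produced by $I(u)^n = 0$ as $u$ ranges over Lie combinations of $a$ and $b$. Rather than trying to identify the right linear combination by hand, the natural move is to feed these atomic relations as defining relators into a nilpotent quotient algorithm (for instance via the GAP package ModIsom), truncate at total degree $2(n-1)$, and read off whether $a^{n-1}b^{n-1} - (-1)^{n-1} b^{n-1}a^{n-1}$ vanishes in the resulting quotient. This is exactly the method available for the small cases: fourteen atomic relations suffice for $n = 4$ and twenty-seven for $n = 5$. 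For part (II) one reruns the check on the specific monomial $p(a,b)$ of multiweight $(n-1,n-1)$; for part (III) the expectation is that one can reduce to (II) by polarization in each variable, since $\F_p$-linear dependence of $p(a_1,\ldots,a_s)$ and its permutation is a much weaker statement than strict (anti-)equality.

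The principal obstacle is that this route can only dispatch finitely many $n$ at a time: the number of atomic relations and the degrees involved balloon combinatorially with $n$, and the algorithm becomes quickly infeasible. A uniform proof would have to explain why the ideal-nilpotency hypothesis $I(a)^n = 0$ encodes precisely this signed commutation, something not at all transparent from the definition. I would suspect an underlying interplay between PBW-ordering in the universal enveloping algebra of the Lie subalgebra generated by $a$ and $b$ and the class-nilpotency of the principal ideals, perhaps visible via a weight argument on monomials of bidegree $(n-1,n-1)$; alternatively, a direct combinatorial certificate rewriting $a^{n-1}b^{n-1} + (-1)^n b^{n-1}a^{n-1}$ explicitly as a finite $\F_p$-linear combination of consequences of $I(u)^n = 0$ would already be enough. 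Finding such a certificate --- or recognizing a hidden recursion from $n$ to $n+1$ in the list of atomic relations produced by the computer --- is the natural next target of the approach, but at present neither seems within reach.
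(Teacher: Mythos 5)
Note that the statement you were asked to prove is Conjecture~\ref{conj:toasties1} in the paper; it is \emph{not} proved there in general, only verified for small $n$ (instance $(\mathrm{II})$ for $n=4$ in Theorem~\ref{thm:toastiesconjecture4} and for $n=5$ in Theorem~\ref{thm:toastiesconjecture5}, with the cases $n=2,3$ done by hand just before). Your proposed method is exactly the paper's method for those cases: expand $(a+\lambda b)\,c_1\,(a+\lambda b)\cdots c_{n-1}\,(a+\lambda b)=0$, separate bidegrees in $\lambda$ by a Vandermonde argument over a field of size $\geq n$, specialize the interstitial variables $c_i$ to $1$, $a$, $b$, etc.\ to produce a finite list of relators (the paper records $14$ for $n=4$ and $27$ for $n=5$, matching the figures you cite), and then compute a nilpotent quotient with ModIsom to check that $a^{n-1}b^{n-1}-(-1)^{n-1}b^{n-1}a^{n-1}$ vanishes. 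You also correctly observe that this cannot yield a uniform proof in $n$, that part $(\mathrm{III})$ should reduce to $(\mathrm{II})$ by polarization, and that what is really needed is a structural certificate rewriting the target as a combination of consequences of $I(u)^n=0$. The one ingredient you do not mention, which the paper's appendix puts forward as its best guess for such a certificate, is to encode the homogeneous relations as a linear system over $\F_p[\sigma]$ with $\sigma^2=1$ using a ``mirror'' symmetry operator on monomials and to seek a triangularization with $(1-\sigma)$ on the diagonal. Neither route currently closes the conjecture, so your assessment of where the gap lies is accurate.
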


\begin{remark}
    Version $(\mathrm{I})$ of the conjecture (for $n=4$) will be at the core of our solution to Wilson's conjecture for $4$-Engel Lie algebras of characteristic $3$. Version $(\mathrm{II})$ is what we are generally able to prove for small values of $n$, and version $(\mathrm{III})$ is merely a natural generalization. Note that the requirement that the field has size $\geq n$ might be superfluous. 
\end{remark}

We will mainly focus on version (I) of the conjecture. Let us check it for $n = 2$. Note that any $n$-strong Lie algebra is also $n$-Engel, and here the identity $ab = -ba$ holds for any $2$-Engel Lie algebras $L$ since, in $A(L)$ we have
\[0 = (a+b)^2 = ab+ba\]
for all $a,b\in L$.

Let us check the conjecture (I) for $n = 3$, so assume now that $L$ is $3$-strong. We will use the method of linearization, which involves hypotheses on the size of the underlying field. For arbitrary $a,b,c$ and scalar $\lambda$, as $I(a+\lambda b)^3 = 0$, we have in particular
\[(a+\lambda b)(a+\lambda b)c (a+\lambda b) = 0.\]
Developing further, we obtain
\[a^2ca+\lambda A+\lambda^2 B + \lambda^3 b^2cb = 0\]
with $A = a^2cb+abca+baca$ and $B = b^2ca+bacb+abcb$. Since $a^2ca = b^2cb = 0$ we have
\[\lambda A+\lambda^2 B = 0.\quad (\ast_1)\]
Putting $\lambda = 1$ and multiplying by $\lambda^2$ we also conclude
\[\lambda^2 A+\lambda^2 B = 0. \quad (\ast_2)\]
Our assumption that the underlying field has size $\geq 3$ implies that there exists $\lambda $ such that $\lambda - \lambda^2\neq 0$, so by subtracting $(\ast_2)$ from $(\ast_1)$ we conclude
\[a^2cb+abca+baca = 0.\quad (\dagger)\]
By setting ``$c = 1$"\footnote{This does not really make sense since a Lie algebra never has an identity element. By this, we mean re-doing the whole argument above omitting $c$ from the beginning. We will keep using this abuse of notation below.} we obtain the identity
\[a^2b+aba+ba^2 = 0\]
(which also holds in $3$-Engel Lie algebras, assuming the field is big enough). By multiplying the above on the right by $b$, we conclude 
\[a^2b^2+(ab)^2+ba^2b = 0.\quad (\ast_3)\]
Using $(\dagger )$ with $c = b$, we also get 
\[a^2b^2+ab^2a+(ba)^2= 0\]
and exchanging $a$ and $b$, we conclude 
\[b^2a^2+ba^2b+(ab)^2 = 0.\quad (\ast_4)\]
Now $(\ast_3)$ and $(\ast_4)$ yield $a^2b^2 = b^2a^2$. It does not take long to deduce from the equations and $a^2b^2 = b^2a^2$ that we also have $ba^2b = ab^2a$ and $(ab)^2 = (ba)^2$.
\begin{theorem}\label{thm:toastiesconjecture4}
    Conjecture \ref{conj:toasties1} (II) holds for $n = 4$.
\end{theorem}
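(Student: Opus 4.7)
The plan is to extend the linearization argument exhibited for $n=3$ in the excerpt, and then delegate the combinatorial bookkeeping to a computer algebra computation. Starting from the defining identity $I(x)^4 = 0$ -- which, written in $A(L)$, reads $x v_1 x v_2 x v_3 x = 0$ for any Lie element $x$ and any products of Lie elements $v_1, v_2, v_3$ -- I would substitute $x \mapsto a + \lambda b$ for Lie elements $a, b$ and a scalar $\lambda$. Expansion produces a polynomial in $\lambda$ of degree at most $4$, and since the base field has at least $4$ elements a Vandermonde argument lets us separate and annihilate each coefficient, producing a multilinear identity of each bidegree $(4-k, k)$ in $(a, b)$. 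Iterating this over a well-chosen list of spines $v_1, v_2, v_3$ (including the ``$v_i = 1$'' convention from the excerpt) yields a library of identities in the free associative algebra $\F_p\langle a, b\rangle$, each of which is a valid consequence of the $4$-strong condition.

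From this library I would distill a short list of $14$ concrete relators $R_1, \ldots, R_{14} \in \F_p\langle a, b\rangle$, all of which vanish in $A(L)$, and form the finitely presented associative algebra
\[
R := \F_p\langle a, b\rangle / \langle R_1, \ldots, R_{14}\rangle.
\]
These are the $14$ equations announced in the statement. I would then compute an associative nilpotent quotient of $R$ retaining information through degree $6$, using the GAP package \texttt{ModIsom} of Eick, which implements a nilpotent quotient algorithm for finitely presented associative algebras. In that quotient, I would enumerate the $\binom{6}{3} = 20$ monomials $p(x, y)$ of multiweight $(3, 3)$ and verify that $p(a, b) + p(b, a) = 0$ for each. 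Any such vanishing in the quotient of $\F_p\langle a, b\rangle$ by the relators proves the corresponding identity in every $4$-strong Lie algebra over a field of size $\geq 4$, establishing (II) for $n = 4$.

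The main difficulty is not conceptual but organizational. The list of $14$ relators must be simultaneously short enough that the nilpotent quotient of $R$ remains computationally tractable and rich enough to force the $20$ required vanishings, so some experimentation is required to obtain a working choice. After the clean $n = 3$ case one might hope for a purely manual derivation, but the combinatorial explosion of degree-$6$ monomials in two noncommuting variables (there are $2^6 = 64$ such monomials, partitioned into $7$ multiweight classes) makes a hand proof unwieldy; delegating the linear-algebraic bookkeeping to \texttt{ModIsom} is the efficient route.
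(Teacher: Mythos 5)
Your proposal matches the paper's proof essentially verbatim: linearize $I(a+\lambda b)^4=0$ against three interposed Lie-element spines $c_1,c_2,c_3$, isolate the coefficients of $\lambda,\lambda^2,\lambda^3$ via a Vandermonde argument over a field of size $\geq 4$, specialize the $c_i$'s to obtain the $14$ relators (the paper's (S1)--(S7) and (L1)--(L7), plus their $a\leftrightarrow b$ symmetrizations), and verify the $20$ multiweight-$(3,3)$ anticommutation identities in a \texttt{ModIsom} nilpotent quotient. The only cosmetic difference is that the paper takes a class-$7$ quotient rather than stopping at degree $6$, which does not affect the argument.
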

\begin{proof}
Let $L$ be a 4-strong toastie algebra over a field of size $\geq 3$. As above, the strategy is to obtain a set of identities from which follows the identities $a^{3}b^{3} + b^{3}a^{3} = 0$, $ab^3a^2+ba^3b^2 = 0$, $(ab)^3+(ba)^3 = 0$ etc. Let $a,b,c_1,c_2,c_3\in L$ and $\lambda$ be a scalar. As $I(a+\lambda b)^4 = 0$ we have in particular:
\[(a+\lambda b) c_1 (a+\lambda b)c_2 (a+\lambda b)c_3 (a+\lambda b) = 0.\]
We start by developing the left hand side of the above equation.
\begin{align*}
    (a+\lambda b) c_1 (a+\lambda b)&c_2 (a+\lambda b)c_3 (a+\lambda b) = ac_1ac_2ac_3a \\
    &+ \lambda (\underbrace{bc_1ac_2ac_3a+ac_1bc_2ac_3a+ac_1ac_2bc_3a+ac_1ac_2ac_3b}_{\mbox{\quad \quad }=: A})\\
    &+ \lambda^2(\underbrace{ac_1ac_2bc_3b+ac_1bc_2ac_3b+bc_1ac_2ac_3b+bc_1ac_2bc_3a+bc_1bc_2ac_3a+ac_1bc_2bc_3a}_{\mbox{\quad \quad }=: B})\\
    &+ \lambda^3(\underbrace{ac_1bc_2bc_3b+bc_1ac_2bc_3b+bc_1bc_2ac_3b+bc_1b_2bc_3a}_{\mbox{\quad \quad }=: C})\\
    &+\lambda^4bc_1bc_2bc_3b
\end{align*}
As $ac_1ac_2ac_3a = bc_1bc_2bc_3b = 0$, we conclude that 
\[\lambda A+\lambda^2 B+\lambda^3 C = 0.\]
By hypotheses, there exist distinct nonzero scalars $\lambda_1,\lambda_2,\lambda_3$ hence we have the matrix equality
\[\begin{bmatrix}
    1 & \lambda_1 & \lambda_1^2\\
    1 & \lambda_2 & \lambda_2^2\\
    1 & \lambda_3 & \lambda_3^2
\end{bmatrix}\cdot \begin{bmatrix}
    A \\
    B \\
    C 
\end{bmatrix} = \begin{bmatrix}
    0 \\
    0 \\
    0 
\end{bmatrix}.\]
The determinant of this Vandermonde matrix is $\prod_{i<j}(\lambda_i - \lambda_j) \neq 0$ hence we deduce $A = B = C = 0$.

\noindent\textbf{Short equations.} From 
\[bc_1ac_2ac_3a+ac_1bc_2ac_3a+ac_1ac_2bc_3a+ac_1ac_2ac_3b = 0\]
we deduce, by setting $c_1 = c_2 = c_3 = 1$
\begin{align*}
    a^3b+a^2ba+aba^2+ba^3 = 0. \tag{S1}
\end{align*}
By assigning two of the $c_i$'s to $1$ and one to $b$, we deduce:
\begin{align*}
    a^3b^2+a^2b^2a+ababa+ba^2ba &= 0, \tag{S2}\\
    a^2bab+a^2b^2a+ab^2a^2+baba^2 &=0, \tag{S3}\\
    aba^2b+ababa+ab^2a^2+b^2a^3 &=0. \tag{S4}
\end{align*}
By assigning one of the $c_i$'s to $1$ and two of the $c_i$'s to $b$, we deduce:
\begin{align*}
    a^2bab^2+a^2b^3a+ab^2aba+(ba)^3 &=0, \tag{S5}\\
    aba^2b^2+abab^2a+ab^2aba+b^2a^2ba &= 0,  \tag{S6}\\
    (ab)^3+abab^2a+ab^3a^2+b^2aba^2 &=0. \tag{S7}
\end{align*}

\noindent \textbf{Long equations.} From 
\[ac_1ac_2bc_3b+ac_1bc_2ac_3b+bc_1ac_2ac_3b+bc_1ac_2bc_3a+bc_1bc_2ac_3a+ac_1bc_2bc_3a = 0\]
we deduce by setting $c_1 = c_2 = c_3 = 1$
\begin{align*}
    a^2b^2+b^2a^2+ab^2a+ba^2b+(ab)^2+(ba)^2 = 0.\tag{L1}
\end{align*}

By assigning one $c_i$ to $a$ and two $c_i$'s to $1$ we obtain 
\begin{align*}
a^3b^2+a^2bab+a^2b^2a+ba^3b+ba^2ba+baba^2 &=0, \tag{L2}\\
a^3b^2+aba^2b+ababa+ba^3b+ba^2ba+b^2a^3 &=0,   \tag{L3}\\
a^2bab+aba^2b+ab^2a^2+ba^3b+baba^2+b^2a^3 &= 0. \tag{L4}
\end{align*}
Finally, by assigning one $c_i$ to $a$, one $c_i$ to $b$ and one $c_i$ to $1$, we get
\begin{align*}
a^3b^3+a^2b^2ab+a^2b^3a+ba^2bab+ba^2b^2a+bab^2a^2 &= 0\tag{L5}\\
a^3b^3+a^2bab^2+a^2b^3a+ba^3b^2+ba^2b^2a+(ba)^3 &= 0 \tag{L6}\\
a^3b^3+aba^2b^2+abab^2a+ba^3b^2+ba^2b^2a+b^2a^2ba &= 0. \tag{L7}
\end{align*}

Now, one can check all the identities $a^3b^3+b^3a^3 = 0$, $ab^3a^2+ba^3b^2 = 0$, $(ab)^3+(ba)^3 = 0$ etc using a computational algebra software. To do so, consider the finitely presented associative algebra in two generators $a,b$ with relations $(S1) - (S7), (L1)-(L7)$ (and their ``symetrized", obtained by exchanging $a$ and $b$) and use a nilpotent quotient algorithm to compute a $7$-quotient of this finitely presented associative algebra. We did so using the Modisom GAP package \cite{ModIsom} which provides a nilpotent quotient algorithm for finitely presented associative algebras.

Surprisingly, for characteristic different from $2$, the short equations are enough to deduce the identity we seek, but in characteristic $2$ we do need to add the long equations as well. A GAP program checking all this is available on my webpage.

Nonetheless, we include a complete proof of how to deduce $a^3b^3+b^3a^3 = 0$ from the equations above in the appendix. The techniques it involves might be useful for solving the conjecture in the general case, see also the discussion there.
\end{proof}

\begin{theorem}\label{thm:toastiesconjecture5}
    Conjecture \ref{conj:toasties1} (II) holds for $n = 5$.
\end{theorem}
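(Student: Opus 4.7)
I will follow the blueprint of Theorem \ref{thm:toastiesconjecture4}. Let $L$ be a $5$-strong Lie algebra over a field $F$ with $|F|\geq 5$, and fix $a,b\in L$. For arbitrary $c_1,c_2,c_3,c_4\in L$ and scalar $\lambda\in F$, the $5$-strong identity applied to $a+\lambda b$ gives
\[
(a+\lambda b)\,c_1\,(a+\lambda b)\,c_2\,(a+\lambda b)\,c_3\,(a+\lambda b)\,c_4\,(a+\lambda b) = 0.
\]
Expanding the left-hand side as a polynomial in $\lambda$, the $\lambda^{0}$ and $\lambda^{5}$ coefficients vanish from the $5$-strong conditions on $a$ and on $b$, leaving $\sum_{k=1}^{4}\lambda^{k}A_{k}=0$, where $A_{k}$ is the sum of the $\binom{5}{k}$ words placing $b$ in $k$ of the five positions (and $a$ in the others), with the fixed $c_{i}$'s between them. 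Since $|F|\geq 5$ there are four distinct nonzero scalars $\lambda_{1},\ldots,\lambda_{4}$; invertibility of the associated $4\times 4$ Vandermonde matrix forces $A_{1}=A_{2}=A_{3}=A_{4}=0$.

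Next I would extract a finite list of polynomial relations in the free associative algebra $\F_{p}\langle a,b\rangle$ by specializing each $c_{i}$ to $1$, $a$, or $b$ (in the abuse of notation of Theorem \ref{thm:toastiesconjecture4}). The involution $a\leftrightarrow b$ exchanges $A_{k}$ with $A_{5-k}$, so it suffices to specialize inside $\{1,b\}$ in the equation $A_{1}=0$ and inside $\{1,a,b\}$ in the equation $A_{2}=0$. After discarding specializations that coincide up to permuting equal $c_{i}$'s, I expect precisely the $27$ relations advertised in the introduction to emerge; these are the analogs, for the two independent master identities $A_{1}=0$ and $A_{2}=0$, of the seven ``short'' relations (S1)--(S7) and the seven ``long'' relations (L1)--(L7) that appear in the proof of Theorem \ref{thm:toastiesconjecture4}.

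The final step is to introduce the finitely presented associative $\F_{p}$-algebra $R$ on generators $a,b$ modulo these $27$ relations together with their images under $a\leftrightarrow b$, and to invoke a nilpotent quotient algorithm such as the ModIsom GAP package \cite{ModIsom} to compute the quotient $R/R^{10}$. In that quotient one verifies directly, for every monomial $p(x,y)$ of multiweight $(4,4)$ in the two noncommuting variables, that $p(a,b)=p(b,a)$, which is the content of Conjecture \ref{conj:toasties1}(II) for $n=5$, since $(-1)^{n-1}=1$.

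The main obstacle is computational rather than conceptual. There are $\binom{8}{4}=70$ monomials of multiweight $(4,4)$ in two noncommuting variables, and carrying the nilpotent quotient to class $9$ under $27$ relators pushes ModIsom to the edge of what it handles comfortably. A secondary concern mirrors the anomaly observed in characteristic $2$ for Theorem \ref{thm:toastiesconjecture4}: some small characteristics may require the full set of specializations while others should be settled by a proper subset, and the concrete plan is to run GAP separately for $p=2,3,5,7$ before verifying the generic case by the same procedure over the ring $\Z$ (or $\Z/N\Z$ for $N$ sufficiently composite to detect any remaining exceptional prime).
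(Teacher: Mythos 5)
Your proposal follows the paper's proof essentially step by step: the same Vandermonde linearization of $I(a+\lambda b)^5=0$ into the homogeneous identities $A_1=\cdots=A_4=0$, the same reduction to $A_1$ and $A_2$ via the $a\leftrightarrow b$ symmetry, the same extraction of the $27$ relations $(S1)$--$(S14)$ and $(L1)$--$(L13)$ by specializing the $c_i$'s to $\{1,b\}$ (resp.\ $\{1,a,b\}$), and the same verification by computing a nilpotent quotient of class $10$ in ModIsom, where the paper in fact finds the stronger fact that every multiweight $(4,4)$ monomial vanishes. The only genuine addition is that you make explicit the observation that $a\leftrightarrow b$ exchanges $A_k$ with $A_{5-k}$, which the paper uses only implicitly when it records ``short'' equations from $A_1$ and ``long'' ones from $A_2$ alone.
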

\begin{proof}
    The strategy here is the same as in the proof of Theorem \ref{thm:toastiesconjecture4}, only more tedious as more equations are involved. Let $L$ be a $5$-strong Lie algebra over a field of size $\geq 5$, let $a,b,c_1,c_2,c_3,c_4\in L$ and $\lambda$ a scalar. From $I(a+\lambda b)^5 = 0$ we have that
    \[(a+\lambda b)c_1(a+\lambda b)c_2(a+\lambda b)c_3(a+\lambda b)c_4(a+\lambda b) = 0.\]
    We compute the right hand side.
    \begin{align*}
        (a+\lambda b)&c_1(a+\lambda b)c_2(a+\lambda b)c_3(a+\lambda b)c_4(a+\lambda b) =\\ &ac_1ac_2ac_3ac_4a\\
        &+\lambda(\overbrace{ac_1ac_2ac_3ac_4b+ac_1ac_2ac_3bc_4a+ac_1ac_2bc_3ac_4a+ac_1bc_2ac_3ac_4a+bc_1ac_2ac_3ac_4a}^{\mbox{\quad \quad} =: A})\\
        &+\lambda^2(\overbrace{ac_1ac_2ac_3bc_4b+ac_1ac_2bc_3ac_4b+ac_1bc_2ac_3ac_4b +bc_1ac_2ac_3ac_4b}^{\mbox{\quad \quad} =:B}\\
        &\mbox{$\quad \quad \quad \quad \quad \quad \quad \quad $}+bc_1ac_2ac_3bc_4a+bc_1ac_2bc_3ac_4a+bc_1bc_2ac_3ac_4a)\\
        &+\lambda^3(\overbrace{bc_1bc_2bc_3ac_4a+bc_1bc_2ac_3bc_4a+bc_1ac_2bc_3bc_4a +ac_1bc_2bc_3bc_4a}^{\mbox{\quad \quad} =:C}\\
        &\mbox{$\quad \quad \quad \quad \quad \quad \quad \quad $}+ac_1bc_2bc_3ac_4b+ac_1bc_2ac_3bc_4b+ac_1ac_2bc_3bc_4b)\\
        &+\lambda^4(\overbrace{bc_1bc_2bc_3bc_4a+bc_1bc_2bc_3ac_4b+bc_1bc_2ac_3bc_4b+bc_1ac_2bc_3bc_4b+ac_1bc_2bc_3bc_4b}^{\mbox{\quad \quad}=: D})\\
        &+\lambda^5bc_1bc_2bc_3bc_4b
    \end{align*}
By hypotheses, $ac_1ac_2ac_3ac_4a = bc_1bc_2bc_3bc_4b = 0$ and as in the proof of Theorem \ref{thm:toastiesconjecture4}, because the underlying field is large enough, we obtain $A = B = C = D = 0$. Again, we obtain various identities by evaluating the $c_i$'s.

\noindent \textbf{Short equations.} Using the identity
\[ac_1ac_2ac_3ac_4b+ac_1ac_2ac_3bc_4a+ac_1ac_2bc_3ac_4a+ac_1bc_2ac_3ac_4a+bc_1ac_2ac_3ac_4a = 0,
\]
    
we deduce, by setting $c_1 = c_2 = c_3 = c_4 = 1$,
\begin{align*}
    a^4b+a^3ba+a^2ba^2+aba^3+ba^4 = 0. \tag{S1}
\end{align*}
By setting three of the $c_i$'s to $1$ and one of the
$c_i$'s to $b$, we get 
\begin{align*}
    a^4b^2+a^3b^2a+a^2baba+aba^2ba+ba^3ba = 0, \tag{S2}\\
    a^3bab+a^3b^2a+a^2b^2a^2+(ab)^2a^2+ba^2ba^2 = 0, \tag{S3}\\
    a^2ba^2b+a^2baba+a^2b^2a^2+ab^2a^3+baba^3 = 0, \tag{S4}\\
    aba^3b+aba^2ba+(ab)^2a^2+ab^2a^3+b^2a^4 = 0. \tag{S5}
\end{align*}
By assigning two of the $c_i$'s to $1$ and two of the
$c_i$'s to $b$, we get 
\begin{align*}
    a^3bab^2+a^3b^3+a^2b^2aba+abababa+ba^2baba = 0, \tag{S6}\\
    a^2ba^2b^2+a^2bab^2a+a^2b^2aba+ab^2a^2ba+baba^2ba = 0, \tag{S7}\\
    aba^3b^2+aba^2b^2a+abababa+ab^2a^2ba+b^2a^3ba = 0, \tag{S8}\\
    aba^2bab+aba^2b^2a+abab^2a^2+ab^2aba^2+b^2a^2ba^2 = 0, \tag{S9}\\
    ababa^2b+abababa+abab^2a^2+b^2aba^3+ab^3a^3 = 0. \tag{S10}
\end{align*}
Now assigning one of the $c_i$'s to $1$ and three of them to $b$, we get
\begin{align*}
    a^2babab^2+a^2bab^3a+a^2b^3aba+ab^2ababa+(ba)^4 = 0, \tag{S11}\\
    aba^2bab^2+aba^2b^3a+abab^2aba+ab^2ababa+b^2a^2(ba)^2 = 0, \tag{S12}\\
    (ab)^2a^2b^2+(ab)^2ab^2a+abab^2aba+ab^3a^2ba+b^2aba^2ba = 0, \tag{S13}\\
    (ab)^4+(ab)^2ab^2a+abab^3a^2+ab^3aba^2+b^2(ab)^2a^2 = 0. \tag{S14}
\end{align*}

\noindent\textbf{Long equations.} We now turn to the identity 
\begin{align*}
ac_1ac_2ac_3bc_4b+ac_1ac_2bc_3ac_4&b+ac_1bc_2ac_3ac_4b +bc_1ac_2ac_3ac_4b\\
&+bc_1ac_2ac_3bc_4a+bc_1ac_2bc_3ac_4a+bc_1bc_2ac_3ac_4a=0.
\end{align*}
We deduce, by setting $c_1 = c_2 = c_3 = c_4 = 1$,
\begin{align*}
    a^3b^2+a^2bab+aba^2b+ba^3b+ba^2ba+baba^2+b^2a^3 = 0. \tag{L1}
\end{align*}
By assigning one of the $c_i$ to $a$ and the others to $1$, we obtain:
\begin{align*}
    a^4b^2+a^3bab+a^2ba^2b+ba^4b+ba^3ba+ba^2ba^2+baba^3 = 0, \tag{L2}\\
    a^4b^2+a^3bab+aba^3b+ba^4b+ba^3ba+ba^2ba^2+b^2a^4 = 0, \tag{L3}\\
    a^4b^2+a^2ba^2b+aba^3b+ba^4b+ba^3ba+baba^3+b^2a^4 = 0, \tag{L4}\\
    a^3bab+a^2ba^2b+aba^3b+ba^4b+ba^2ba^2+baba^3+b^2a^4 = 0. \tag{L5}
\end{align*}
By setting one of the $c_i$ to $b$ and the others to $1$, we obtain:
\begin{align*}
    aba^2b^2+(ab)^3+ab^2a^2b+b^2a^3b+b^2a^2ba+b^2aba^2+b^3a^3 = 0, \tag{L6}\\
    a^2bab^2+a^2b^2ab+ab^2a^2b+baba^2b+(ba)^3+bab^2a^2+b^3a^3 = 0, \tag{L7}\\
    a^3b^3+a^2b^2ab+(ab)^3+ba^2bab+ba^2b^2a+bab^2a^2+b^2aba^2 = 0, \tag{L8}\\
    a^3b^3+a^2bab^2+aba^2b^2+ba^3b^2+ba^2b^2a+(ba)^3+b^2a^2ba = 0. \tag{L9}
\end{align*}
By assigning one of the $c_i$'s to $a$, one to $b$ and the remaining two to $1$, we obtain:
\begin{align*}
    a^3bab^2+a^3b^2ab+a^2b^2a^2b+ba^2ba^2b+ba^2(ba)^2+ba^2b^2a^2+bab^2a^3 = 0, \tag{L10}\\
    aba^3b^2+aba^2bab+ab^2a^3b+b^2a^4b+b^2a^3ba+b^2a^2ba^2+b^3a^4 = 0, \tag{L11}\\
    a^4b^3+a^3b^2ab+a^2b(ab)^2+ba^3bab+ba^3b^2a+ba^2b^2a^2+(ba)^2ba^2 = 0, \tag{L12}\\
    aba^3b^2+(ab)^2a^2b+ab^2a^3b+b^2a^4b+b^2a^3ba+b^2aba^3+b^3a^4 = 0. \tag{L13}
\end{align*}
Actually, for $(L10)-(L13)$ we only considering the cases 
\[
(c_1,c_2,c_4,c_4)\in \set{(a,b,1,1),(b,a,1,1),(a,1,b,1),(b,1,a,1)}.
\]
Now, as in the proof of Theorem \ref{thm:toastiesconjecture4}, in a computational algebra software we consider the finitely presented associative algebra with two generators and relations corresponding to equations $(S1)-(S14)$ and $(L1)-(L13)$. Then we compute a nilpotent quotient of class $10$ of this finitely presented algebra (we use the GAP package \cite{ModIsom}) and we check the identity. In characteristic $p\in \set{2,3,5,7}$ (which are the ones of interest), we actually find that 
\[a^4b^4 = 0.\]
and more generally, every monomial $p(x,y)$ of multiweight $(4,4)$ vanishes.
\end{proof}

\subsection{$\omega$-categorical $4$-strong Lie algebras}
As we already mentioned above, an $\omega$-categorical toastie algebra is $n$-strong for some $n$. In the following proof, we will also see that on the other hand, a characteristically simple $n$-strong Lie algebra is toastie.

\begin{theorem}\label{thm:4strongLAnilpotent}
    Let $L$ be an $\omega$-categorical $4$-strong Lie algebra of characteristic $p \neq 2$, then $L$ is nilpotent.
\end{theorem}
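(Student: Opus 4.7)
The plan is to reduce to the case where $L$ is characteristically simple, identify such an $L$ as a toastie algebra, extend scalars to apply the identity $a^3 b^3 = -b^3 a^3$ of Theorem~\ref{thm:toastiesconjecture4}, interpret a commutative subalgebra of the enveloping algebra and invoke Cherlin's theorem, and then translate the resulting operator identity back into nilpotency of $L$. First, by Corollary~\ref{cor:wilsontheorem} and the reduction to the characteristically simple case that follows it—together with the easy observation that quotients of $4$-strong Lie algebras remain $4$-strong—we may assume $L$ is characteristically simple. For any nonzero $x\in L$, the ideal $I(x)$ has nilpotency class at most $3$, and the lowest nontrivial term of its lower central series is an abelian ideal of $L$, producing a nonzero toastie. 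Hence $S(L)\neq 0$. The span $\langle S(L)\rangle$ is $\Aut(L)$-invariant and is an ideal (the bracket of a toastie with any element of $L$ is a toastie), so it is a nonzero characteristic ideal; characteristic simplicity forces $L = \langle S(L)\rangle$, i.e.\ $L$ is a toastie algebra.

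Next, I would extend scalars to $\tilde L = L\otimes_{\F_p}\F_{p^k}$ for $k$ with $p^k\geq 4$ (possible since $p\neq 2$; take $k=2$ when $p=3$). By Section~\ref{subsec:interpretation}, $\tilde L$ is interpretable in $L$ and so $\omega$-categorical, and by Lemma~\ref{lm:scalarextensiontoastie}(3) it remains a $4$-strong toastie algebra, now over a field of size $\geq 4$. Theorem~\ref{thm:toastiesconjecture4} then supplies the identity $a^3 b^3 = -b^3 a^3$ in $A(\tilde L)$. Four successive applications of this identity show that any two elements $\ad(a)^3\ad(b)^3$ and $\ad(c)^3\ad(d)^3$ commute, so the associative subalgebra $B\subseteq A(\tilde L)$ generated by the family $\{\ad(a)^3\ad(b)^3 : a,b\in\tilde L\}$ is commutative.

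The third step interprets and applies Cherlin. By Theorem~\ref{thm:interpretationBa} applied with $\cF = \{x^3 y^3\}$, the commutativity of $B$ makes the hypothesis of the theorem hold for every $a\in\tilde L$, and the associative algebra $B_a$ is interpretable in $\tilde L$, hence in $L$, and therefore $\omega$-categorical. The uniform local nilpotency of $\tilde L$ makes $A(\tilde L)$, and so $B$ and $B_a$, into nilrings. By Fact~\ref{fact:cherlinomegacatnilringarenilpotent}, $B_a$ is nilpotent. A standard uniformity argument using the finiteness of $\Aut(\tilde L)$-orbits produces $N\in\N$ such that $B_a$ is nilpotent of class at most $N$ for every $a$; equivalently, $B^N$ annihilates $\tilde L$, so $B^N = 0$ in $A(\tilde L)$, i.e.\ for all $x, a_i, b_i\in\tilde L$,
\[
[x,\, a_1^3 b_1^3\, a_2^3 b_2^3\, \cdots\, a_N^3 b_N^3] = 0.
\]

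The hard part will be the final bootstrap from this specific polynomial identity to an unqualified nilpotency bound on $\tilde L$. This is where the toastie structure is indispensable: every element of $\tilde L$ is a sum of boundedly many toasties; each toastie $c$ is a Kostrikin sandwich ($\ad(c)^2 = 0$ and $\ad(c)\ad(b)\ad(c) = 0$); and the toastie linearization principle (Lemma~\ref{lm:linearizationEngeltoastie}), together with the fact that spans of polynomial-identity values form ideals in toastie algebras (Remark~\ref{rk:moretedious}), should allow the displayed identity to be linearized into a family of identities on products of toastie-adjoints. Combined with the sandwich annihilation relations, these identities should force the full associative algebra $A(\tilde L)$ to be nilpotent, hence $\tilde L$ to be nilpotent; and as $L$ embeds in $\tilde L$ via $a\mapsto a\otimes 1$, the nilpotency of $\tilde L$ transfers to $L$. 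The main technical obstacle lies precisely in this last transition from a particular polynomial identity to an unqualified nilpotency bound, requiring a delicate synthesis of toastie linearization and sandwich identities.
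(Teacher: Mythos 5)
Your proposal tracks the paper's proof closely through the reduction to a characteristically simple toastie algebra, the scalar extension, the application of Theorem~\ref{thm:toastiesconjecture4} to obtain a commutative subalgebra $B\subseteq A(\tilde L)$, and the appeal to Theorem~\ref{thm:interpretationBa} and Cherlin's theorem to obtain a uniform identity $[x\,a_1^3b_1^3\cdots a_N^3b_N^3]=0$. All of that is correct and essentially identical to the paper.

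The gap is exactly where you flag it: the final bootstrap from this one polynomial identity to nilpotency of $L$. You gesture at proving that $A(\tilde L)$ is nilpotent outright by combining ``toastie linearization and sandwich identities,'' but you neither carry this out nor is it the route the paper takes, and it is not at all clear it could succeed as stated --- the identity you have is far from forcing nilpotency of the full enveloping algebra. The paper's actual final step is more modest and more concrete: set
$I_j = \Span\bigl([a\,b_1^3\cdots b_j^3]\mid a,b_1,\ldots,b_j\in L\bigr)$ for $j = 1,\ldots,s-1$. By Lemma~\ref{lm:toastielinearspanengel} each $I_j$ is an ideal. The uniform identity says that every element of $I_{s-1}$ (and likewise each quotient $I_{j}/I_{j+1}$) is annihilated by $\ad(b)^3$ for all $b\in L$, so $I_{s-1}$ and each $I_j/I_{j+1}$ are $3$-Engel Lie algebras of characteristic $\neq 2$, hence nilpotent by Higgins (or, for $p=5$, by the earlier paper / Remark~\ref{rk:revisiting}). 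One then climbs the chain: whenever two consecutive pieces are nilpotent, the enclosing ideal is solvable, and Corollary~\ref{cor:wilsontheorem} (Wilson's theorem for $\omega$-categorical Engel Lie algebras) upgrades solvable to nilpotent. Iterating gives nilpotency of $L$. This chain-plus-Wilson argument is precisely the missing ingredient in your proposal; without it, or a genuine substitute, the proof is incomplete.
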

\begin{proof}
    By $\omega$-categoricity, $L$ has only finitely many characteristic ideals. Consider a maximal chain of characteristic ideals $L = J_0\supsetneq J_2\supsetneq \ldots \supsetneq J_{m+1} = 0$. Then, the quotient $J_i/J_{i+1}$ is characteristically simple and $4$-strong for each $0\leq i\leq m$. Assume that we have established that $J_i/J_{i+1}$ is nilpotent for each $0\leq i\leq m$. Then $J_{m-1}/J_m$ and $J_m$ are nilpotent, hence $J_{m-1}$ is solvable. By Corollary \ref{cor:wilsontheorem}, $J_{m-1}$ is nilpotent. An immediate induction yields that $L = J_0$ is nilpotent. Hence, we may assume that $L$ is a characteristically simple $4$-strong Lie algebra. Observe that the ideal $J$ of $L$ generated by all toasties is characteristic. Furthermore, for any $a,b\in L$, the element $[a,b^2]$ is a toastie, hence either $[a,b^2] = 0$ for all $a,b\in L$ and $L$ is $2$-Engel hence nilpotent by classical result of Higgins \cite{higginsEngel}, or $J=L$. It follows that we may assume that $L$ is a $4$-strong toastie algebra.

    Let $F$ be a finite field extension of $\F_p$ of cardinality $\geq 4$ and let $\tilde L = L\otimes _{\F_p} F$. By Lemma \ref{lm:scalarextensiontoastie}, $\tilde L$ is again a $4$-strong toastie algebra, and by Subsection \ref{subsec:interpretation}, $\tilde L$ is $\omega$-categorical. Note that here $\tilde L$ might no longer be characteristically simple. If $\tilde L$ is nilpotent, so is $L$, hence we may assume that $L$ is an $\omega$-categorical $4$-strong toastie algebra over a field of size $\geq 4$.

     We consider the set $X = \set{b^3c^3\mid b,c\in L}\seq A(L)$.
     By Theorem \ref{thm:toastiesconjecture4}, the subalgebra $B$ of $A(L)$ generated by $X$ is commutative, hence by Theorem \ref{thm:interpretationBa}, $B_a$ is an $\omega$-categorical associative algebra. As $L$ is uniformly locally nilpotent, $B_a$ is nil, hence by Cherlin's Theorem (Fact \ref{fact:cherlinomegacatnilringarenilpotent}), $B_a$ is nilpotent. We conclude that there exists $s = s(a)\in \N$ such that for all $b_1,\ldots,b_s\in L$  we have \[[ab_1^3\cdots b_s^3] = 0.\]
By $\omega$-categoricity, there is a uniform $s\in \N$ such that $[ab_1^3\ldots b_s^3] = 0$ for all $a,b_1,\ldots,b_s\in L$.
We now define $I_1 = \Span([ab^3]\mid a,b\in L)$, $I_2 = \Span([ab_1^3b_2^3]\mid a,b_1,b_2\in L)$,$\ldots$, and 
\[I_{s-1} = \Span([ab_1^3\cdots b_{s-1}^3]\mid a,b_1,\ldots,b_{s-1}\in L).\]
Using Lemma \ref{lm:toastielinearspanengel}, the sets $I_1,\ldots,I_{s-1}$ are ideals of $L$. Observe that $I_{s-1}$ is an $\omega$-categorical $3$-Engel Lie algebra over a field of characteristic $p\neq 2$, hence it is nilpotent: for $p\neq 5$ this is a classical result of Higgins \cite{higginsEngel} (and does not require $\omega$-categoricity), for $p = 5$ this follows from Remark \ref{rk:revisiting} below or by using \cite{delbee2024engel3}. Similarly, 
$I_{s-2}/I_{s-1}$ is also nilpotent. Then $I_{s-2}$ is solvable, so by Corollary \ref{cor:wilsontheorem}, we conclude that $I_{s-2}$ is nilpotent. An immediate iteration yields that $L$ is nilpotent. 
\end{proof}

\begin{remark}[Revisiting $3$-Engel Lie algebras of characteristic $5$]\label{rk:revisiting}
    As $2$-Engel Lie algebras in any characteristic are nilpotent, it is easy to follow the proof of Theorem \ref{thm:4strongLAnilpotent} to prove that any $\omega$-categorical $3$-strong Lie algebra is nilpotent, regardless of the characteristic.
    Gunnar Traustason \cite{TraustasonEngel3Engel41993} also proved that $3$-Engel Lie algebras of characteristic $\neq 2$ are $3$-strong, hence $\omega$-categorical $3$-Engel Lie algebras of characteristic $5$ are nilpotent, and we recover the main result of \cite{delbee2024engel3}. However the result in \cite{delbee2024engel3} is stronger because it only assumes the finiteness of $4$-types (the action of $\Aut(L)$ on $L\times L\times L\times L$ has only finitely many orbits), which is a much weaker assumption than $\omega$-categoricity. The approach given by interpreting an associative algebra and then using Cherlin's result is bound to lose control on the $n$ for which the finiteness of $n$-types is the only assumption to get nilpotency. Note that there is no reason to think that such an $n$ always exists.
\end{remark}
\subsection{The case $4$-Engel characteristic $3$} The solution to this case will be given by a theorem of Gunnar Traustason.

\begin{fact}[Traustason \cite{TraustasonEngel3Engel41993}]\label{fact:traustason}
    Let $L$ be a $4$-Engel Lie algebra in a field of characteristic $\neq 2,5$. Then $I(a)^4 = 0$ for all $a\in L$.
\end{fact}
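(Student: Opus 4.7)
The goal is $I(a)^4 = 0$ for every $a$ in any 4-Engel Lie algebra $L$ over a field of characteristic $p \notin \{2, 5\}$. My plan is to treat this as a polynomial identity in finitely many variables, reduce to an infinite-field situation where the 4-Engel identity can be fully linearized, and then carry out a finite verification in the relatively free 4-Engel Lie algebra on a small number of generators.

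\textbf{Linearization after extension of scalars.} Since ``$I(a)^4 = 0$'' is a polynomial identity in $a$ together with finitely many test elements, it is preserved under extension of scalars, so one may assume the base field is infinite. Then substituting $y = \alpha_1 y_1 + \cdots + \alpha_4 y_4$ in $[x, y^4] = 0$ and extracting the coefficient of $\alpha_1 \alpha_2 \alpha_3 \alpha_4$ via a Vandermonde argument yields the fully linearized 4-Engel identity
\[ \sum_{\sigma \in \fS_4} [x, y_{\sigma(1)}, y_{\sigma(2)}, y_{\sigma(3)}, y_{\sigma(4)}] = 0. \]
This extraction costs no inversion of $4!$, so the identity is valid in any characteristic once the field is infinite; in particular, characteristic $3$ is fine.

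\textbf{Systematic Jacobi reduction.} A generic element of $I(a)^4$ is a bracket $[u_1, u_2, u_3, u_4]$ with each $u_i$ a left-normed bracket in $a$ and other variables. Repeated use of Jacobi lets one rewrite such an expression as a linear combination of left-normed brackets in which the four distinguished occurrences of $a$ sit in four prescribed positions. One then applies the linearized Engel identity at each such $a$-position, symmetrizing over the other arguments. The cumulative effect should collapse the sum to zero; concretely, the claim reduces to a finite identity in the relatively free 4-Engel Lie algebra on a bounded number of generators (five should suffice, as one has four test elements for the $u_i$ plus the distinguished element $a$).

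\textbf{Main obstacle.} The principal difficulty is the combinatorial bookkeeping: the linearized identity has $24$ summands, and nested applications of Jacobi followed by substitution proliferate terms rapidly. The coefficients that must be inverted during the reduction turn out to have small prime factors --- specifically $2$ and $5$ --- which is precisely why those two characteristics must be excluded. Identifying these coefficients explicitly is exactly where Traustason's original proof does the real work, and in modern practice one would verify the claim using a nilpotent quotient algorithm applied to the universal enveloping algebra of the free 4-Engel Lie algebra of rank $5$, in the spirit of the computations carried out for Theorem~\ref{thm:toastiesconjecture4} elsewhere in this paper.
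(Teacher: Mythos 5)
This statement is quoted in the paper as an external fact, with no proof given beyond the citation to Traustason, so the only question is whether your sketch would actually establish it. It would not, and the fatal gap is at the very first step. To ``assume the base field is infinite'' you must know that $L\otimes_K F$ is still $4$-Engel, and this is exactly what fails in the critical case of characteristic $3$: the identity $[x,y^4]=0$ is not multilinear, and for $p\le n$ the $n$-Engel condition is not equivalent to the family of its multihomogeneous components, so it need not survive extension of scalars. (Concretely, over $\F_3$ the substitution $y\mapsto y+\lambda z$ only gives you three values of $\lambda$, enough to isolate the components of degree $0,2,4$ in $z$ but not to separate degrees $1$ and $3$; so the partial linearizations needed for $L\otimes F$ to be $4$-Engel are not all available.) This is not a pedantic point --- it is the central obstruction that the present paper is organized around: the whole apparatus of toastie algebras exists precisely because the Kostrikin--Zelmanov trick of extending scalars to an infinite field is unavailable for general Engel Lie algebras in small characteristic, and Lemma \ref{lm:scalarextensiontoastie} needs the toastie hypothesis to push $n$-Engel through a scalar extension. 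Your argument, as written, is circular for characteristic $3$, which is the only case this paper actually uses.

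Beyond that, the remainder is a plan rather than a proof: ``the cumulative effect should collapse the sum to zero'' and the deferral to ``where Traustason's original proof does the real work'' leave the entire content unestablished, and the claim that five generators suffice is unjustified, since $I(a)^4=0$ is a scheme of identities $[aw_1aw_2aw_3a]=0$ in unboundedly many variables and you give no reduction to bounded rank. Finally, the diagnosis that characteristics $2$ and $5$ are excluded because of coefficients to be inverted misrepresents the situation: the statement is actually false in those characteristics (in characteristic $5$ one only gets $I(a)^6=0$, going back to Higman's work on exponent $5$), so the exclusion is forced by counterexamples, not by an artifact of the computation.
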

In particular, any $4$-Engel Lie algebra in characteristic $3$ is $4$-strong. We immediately deduce our main result from Theorem \ref{thm:4strongLAnilpotent}.

\begin{corollary}\label{thm:casen=4p=3}
    Let $L$ be an $\omega$-categorical $4$-Engel Lie algebra over a field of characteristic $3$. Then $L$ is nilpotent.
\end{corollary}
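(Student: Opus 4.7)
The statement is an immediate consequence of the two results that immediately precede it, so my plan is simply to combine them.

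First I would invoke Fact~\ref{fact:traustason} of Traustason: since the base field has characteristic $3$, which is distinct from both $2$ and $5$, every $4$-Engel Lie algebra over $\F_3$ satisfies $I(a)^4 = 0$ for all $a \in L$. By the definition given earlier in the section, this means that $L$ is a $4$-strong Lie algebra.

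Then I would apply Theorem~\ref{thm:4strongLAnilpotent}: $L$ is an $\omega$-categorical $4$-strong Lie algebra over a field of characteristic $3 \neq 2$, so it is nilpotent. This finishes the proof.

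There is no real obstacle here; the substantive work has been done in establishing Theorem~\ref{thm:4strongLAnilpotent} (which in turn relies on the commutativity identity from Theorem~\ref{thm:toastiesconjecture4} and the interpretation machinery of Theorem~\ref{thm:interpretationBa}) and in Traustason's theorem. The corollary is just the syntactic observation that the hypothesis ``$4$-Engel in characteristic $3$'' fits the hypothesis ``$4$-strong in characteristic $\neq 2$'' via Fact~\ref{fact:traustason}. If one wanted to make the writeup slightly more self-contained, one could briefly recall that $I(a)^4 = 0$ means precisely that the principal ideal generated by $a$ is nilpotent of class at most $3$, which by definition is what it means for $L$ to be $4$-strong, and that characteristic $3$ indeed satisfies the exclusion ``$\neq 2$'' required by Theorem~\ref{thm:4strongLAnilpotent}.
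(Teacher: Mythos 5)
Your proof is correct and is exactly the argument the paper gives: Traustason's Fact~\ref{fact:traustason} shows a $4$-Engel Lie algebra of characteristic $3$ is $4$-strong, and then Theorem~\ref{thm:4strongLAnilpotent} applies since $3 \neq 2$. Nothing to add.
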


\begin{corollary}\label{cor:5strongomegacategoricalnilpotent}
    Let $L$ be an $\omega$-categorical $5$-strong Lie algebra of characteristic $p\neq 2,5$. Then $L$ is nilpotent.
\end{corollary}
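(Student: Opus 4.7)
The plan is to carry out the argument of Theorem \ref{thm:4strongLAnilpotent} verbatim, replacing the role of $4$ by $5$ throughout and invoking Theorem \ref{thm:toastiesconjecture5} in place of Theorem \ref{thm:toastiesconjecture4}. I split the proof into three phases and then comment on the (mild) obstacles.

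First, I would reduce to the case where $L$ is a characteristically simple $5$-strong toastie algebra over a field of size $\geq 5$. By $\omega$-categoricity $L$ has only finitely many characteristic ideals, so a maximal chain of such ideals together with Corollary \ref{cor:wilsontheorem} reduces the problem to $L$ characteristically simple. I would then observe that in any $5$-strong Lie algebra, the element $c = [ab^3]$ is a toastie: indeed $c\in I(b)^3$ since $[ab], [abb], [abbb]$ lie in $I(b), I(b)^2, I(b)^3$ respectively, hence $I(c)\subseteq I(b)^3$ and therefore $I(c)^2 \subseteq [I(b)^3,I(b)^3] \subseteq I(b)^6 = 0$, the last equality because the lower central series of $I(b)$ is decreasing and $I(b)^5 = 0$. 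Hence the ideal of $L$ generated by all toasties is characteristic and either trivial -- in which case $[ab^3] = 0$ for all $a, b$, so $L$ is $3$-Engel, hence nilpotent by Higgins \cite{higginsEngel} (using $p\neq 2,5$) -- or equal to $L$, making $L$ a toastie algebra. Extending scalars to a finite extension of $\F_p$ of size $\geq 5$ preserves both the $5$-strong toastie property (Lemma \ref{lm:scalarextensiontoastie}) and $\omega$-categoricity (Subsection \ref{subsec:interpretation}), so we may also assume the ground field has size $\geq 5$.

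Second, I would use the enveloping algebra. By Theorem \ref{thm:toastiesconjecture5}, $a^4 b^4 = b^4 a^4$ for all Lie elements $a,b\in A(L)$. A short computation then shows that the associative subalgebra $B$ of $A(L)$ generated by $X = \set{b^4 c^4 \mid b,c\in L}$ is commutative: two generators $b_1^4 c_1^4$ and $b_2^4 c_2^4$ can be rearranged, using the identity above, into the common form $b_2^4 b_1^4 c_2^4 c_1^4$. Fixing any $a\in L$, Theorem \ref{thm:interpretationBa} interprets $B_a$ in $L$, so $B_a$ is $\omega$-categorical; since $L$ is uniformly locally nilpotent (being $\omega$-categorical Engel), $B_a$ is a nil associative algebra, and Cherlin's theorem (Fact \ref{fact:cherlinomegacatnilringarenilpotent}) gives that $B_a$ is nilpotent. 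As in the proof of Theorem \ref{thm:4strongLAnilpotent}, a second invocation of $\omega$-categoricity extracts a uniform $s\in \N$ such that $[ab_1^4\cdots b_s^4] = 0$ for all $a, b_1,\ldots,b_s\in L$.

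Third, I would conclude by a descending chain of ideals. Set $I_0 = L$, $I_k = \Span([ab_1^4\cdots b_k^4]\mid a, b_i\in L)$ for $1\leq k\leq s-1$, and $I_s = 0$; these are ideals of $L$ by Remark \ref{rk:moretedious}. For each $k$, any $c\in I_k$ satisfies $[cb^4]\in I_{k+1}$ for all $b\in L$, so $I_k/I_{k+1}$ is $4$-Engel; since $p\neq 2,5$, Fact \ref{fact:traustason} upgrades this to $4$-strong, and then Theorem \ref{thm:4strongLAnilpotent} (applicable because $p\neq 2$) gives that $I_k/I_{k+1}$ is nilpotent. An immediate induction combining this with Corollary \ref{cor:wilsontheorem} (solvability implies nilpotency in the $\omega$-categorical Engel setting) yields that $L = I_0$ is nilpotent.

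There is no serious obstacle beyond verifying that the ingredients are all in place: the one substantial new input is Theorem \ref{thm:toastiesconjecture5}, which provides the identity $a^4 b^4 = b^4 a^4$ needed to commute the enveloping algebra, and Fact \ref{fact:traustason} which converts $4$-Engel into $4$-strong so that Theorem \ref{thm:4strongLAnilpotent} applies. The characteristic constraints $p\neq 2,5$ are tight for this strategy: $p\neq 5$ is required both for Fact \ref{fact:traustason} and for Higgins' result on $3$-Engel algebras, while $p\neq 2$ is required for the subsidiary application of Theorem \ref{thm:4strongLAnilpotent}.
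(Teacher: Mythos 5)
Your proposal is correct and follows the blueprint the paper explicitly invites ("copy-paste the proof of Theorem~\ref{thm:4strongLAnilpotent}"), so it is essentially the same approach. The one difference worth pointing out: the paper's own proof first invokes Vaughan-Lee's theorem (that $5$-Engel Lie algebras of characteristic $p>7$ are nilpotent outright) to restrict attention to $p\in\{3,7\}$, and for those two characteristics the GAP computation in Theorem~\ref{thm:toastiesconjecture5} actually yields the stronger identity $x^4y^4=0$, which lets the paper bypass the interpretation of $B_a$ and Cherlin's theorem entirely and go straight to an ideal chain of length $2$; the quotients are then handled by Higgins for $p=7$ and by Corollary~\ref{thm:casen=4p=3} for $p=3$. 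You instead run the full machinery — commutativity of the subalgebra generated by $\{b^4c^4\}$ from $a^4b^4=b^4a^4$, Theorem~\ref{thm:interpretationBa} plus Cherlin to extract a uniform $s$, and then handle each $4$-Engel quotient $I_k/I_{k+1}$ by combining Fact~\ref{fact:traustason} with Theorem~\ref{thm:4strongLAnilpotent}. Both routes are valid; yours is the more uniform generalization of Theorem~\ref{thm:4strongLAnilpotent}, while the paper's is slightly leaner because it exploits the case-specific vanishing identity. Your preliminary observation that $[ab^3]$ is a toastie in a $5$-strong algebra (via $I([ab^3])\subseteq I(b)^3$ and $[I(b)^3,I(b)^3]\subseteq I(b)^6=0$) is correct and mirrors the $[ab^2]$ argument the paper uses for $4$-strong. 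One small caveat you could add for safety: since the GAP verification underlying Theorem~\ref{thm:toastiesconjecture5} is characteristic-specific, invoking Vaughan-Lee first (as the paper does) to restrict to $p\in\{3,7\}$ removes any doubt about applying that theorem in large characteristic — though for $p>7$ you could equally well observe that $5$-strong implies $5$-Engel and cite Vaughan-Lee directly.
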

\begin{proof}
    First, by \cite{vaughanlee20245engelliealgebras}, $5$-Engel Lie algebras of characteristic $p>7$ are nilpotent, hence we need only to consider $p = 3$ or $p = 7$. Then, copy-paste the proof of Theorem \ref{thm:4strongLAnilpotent}, using Theorem \ref{thm:toastiesconjecture5} to reduce the problem to $4$-Engel Lie algebras. In this case, one does not need an interpretation of the associative algebra $B_a$ since by the proof of Theorem \ref{thm:toastiesconjecture5} the identity $x^4y^4 = 0$ holds in $5$-strong toastie algebras of characteristic $3$ or $7$. Then use Higgins \cite{higginsEngel} (in fact Kostrikin \cite{kostrikinAroundBurnside}) for $p = 7$ and Corollary \ref{thm:casen=4p=3} for $p = 3$.
\end{proof}

\subsection{A further reduction for $n<p$} 
\begin{theorem}\label{thm:furtherreduction}
Wilson's Conjecture holds for $n$-Engel Lie algebras of characteristic $p>n$, provided one of the following holds:
    \begin{itemize}
        \item (Assuming variant (I) of Conjecture \ref{conj:toasties1}) Every $\omega$-categorical $n$-strong $(n-1)$-Engel toastie algebra is nilpotent.
        \item (Assuming variant (III) of Conjecture \ref{conj:toasties1}) For all $n$, there exists finitely many monomials \[p_1(x_1,\ldots,x_{k_1}),\ldots,p_s(x_1,\ldots,x_{k_s})\] 
        of multiweight $(n-1,\ldots,n-1)$ such that every $\omega$-categorical $n$-strong toastie algebra satisfying the identities \[p_i(x_1,\ldots,x_{k_i}) = 0 \quad \text{for all $i = 1,\ldots,s$}\]
        is nilpotent.
    \end{itemize}
\end{theorem}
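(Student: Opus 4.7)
The plan is to lift the strategy of Theorem \ref{thm:4strongLAnilpotent} to arbitrary $n<p$, using Conjecture \ref{conj:toasties1} in place of the explicit identity $a^3b^3+b^3a^3=0$ and applying the bullet hypothesis in place of the known nilpotency results for smaller Engel class. Let $L$ be an $\omega$-categorical $n$-Engel Lie algebra over $\F_p$ with $p>n$. By Kostrikin's reduction (mentioned after Fact \ref{fact:cherlinomegacatnilringarenilpotent}), it suffices to show that every $\omega$-categorical $n$-strong Lie algebra over $\F_p$ is nilpotent, so I henceforth assume $L$ is $n$-strong. As in the opening of the proof of Theorem \ref{thm:4strongLAnilpotent}, I would take a maximal chain of characteristic ideals and apply Wilson's theorem (Corollary \ref{cor:wilsontheorem}) to reduce to $L$ being characteristically simple. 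In that setting the ideal of $L$ generated by all toasties is characteristic, hence either trivial or all of $L$. If trivial, then every element $[a,b^{\lceil n/2\rceil}]$ is a toastie---it sits in $I(b)^{\lceil n/2\rceil}$, which is an ideal of $L$, and $[I(b)^{\lceil n/2\rceil},I(b)^{\lceil n/2\rceil}]\seq I(b)^{n}=0$---so it vanishes, making $L$ into a $\lceil n/2\rceil$-Engel algebra, and I would close this sub-case by induction on $n$. Otherwise $L$ is a toastie algebra, and after extending scalars to a finite field $F\supseteq\F_p$ of size $\geq n$ (preserving $\omega$-categoricity via Subsection \ref{subsec:interpretation} and the $n$-strong toastie structure via Lemma \ref{lm:scalarextensiontoastie}), I would proceed with the bullet-specific argument.

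For Variant (I): Conjecture \ref{conj:toasties1}(I) guarantees that the subalgebra $B\seq A(L)$ generated by $X=\set{b^{n-1}c^{n-1}\mid b,c\in L}$ is commutative. Fixing $a\in L$, Theorem \ref{thm:interpretationBa} interprets $B_a$ as an $\omega$-categorical associative algebra; since $L$ is uniformly locally nilpotent (by Zelmanov), $B_a$ is a nilring, hence nilpotent by Cherlin (Fact \ref{fact:cherlinomegacatnilringarenilpotent}). This yields a uniform $s\in\N$ such that $[ab_1^{n-1}\cdots b_s^{n-1}]=0$ for all $a,b_1,\ldots,b_s\in L$. Setting $I_j=\Span([ab_1^{n-1}\cdots b_j^{n-1}]\mid a,b_i\in L)$, each $I_j$ is an ideal of $L$ by Remark \ref{rk:moretedious}, and clearly characteristic under $\Aut(L)$. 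Taking $s$ minimal with $I_s=0$ forces $I_{s-1}\neq 0$, and by characteristic simplicity $I_{s-1}=L$. In other words $[x,y^{n-1}]=0$ for all $x,y\in L$, so $L$ itself is $(n-1)$-Engel, and the bullet (I) hypothesis applies: $L$ is $\omega$-categorical, $n$-strong, $(n-1)$-Engel and toastie, hence nilpotent.

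For Variant (III): the argument runs parallel, but Conjecture \ref{conj:toasties1}(III) only gives that each monomial $p$ of multiweight $(n-1,\ldots,n-1)$ has $\F_p$-linearly dependent reorderings. Using Theorem \ref{thm:interpretationBa} on a carefully chosen subalgebra of $A(L)$ in which each orbit of reorderings collapses to a single line, one still produces commutative $\omega$-categorical interpretable quotients, nilpotent by Cherlin. From the resulting vanishing one extracts polynomial identities of multiweight $(n-1,\ldots,n-1)$ satisfied by $L$; by $\omega$-categoricity only finitely many such identities are needed, and one matches the output to the monomials $p_1,\ldots,p_s$ provided by the bullet to conclude. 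The hardest point in both variants is the toastie reduction in the trivial-toasties sub-case, which requires closing the resulting $\lceil n/2\rceil$-Engel algebra via induction on $n$ using the analogous bullet hypothesis for smaller Engel class. For Variant (III), pinning down the finite list $p_1,\ldots,p_s$ so that the output of the Theorem \ref{thm:interpretationBa}--Cherlin machinery realizes the identities demanded by the hypothesis is the additional delicate step.
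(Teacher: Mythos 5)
Your high-level plan---rerun Theorem \ref{thm:4strongLAnilpotent} with Conjecture \ref{conj:toasties1} supplying the commutativity and the bullet hypotheses supplying the base case---is indeed the paper's strategy, and your variant (I) argument is nearly right. But variant (III) is not actually proved: Conjecture \ref{conj:toasties1}(III) only gives that $p_1(\vec b_1,\ldots)p_1(\vec c_1,\ldots)$ and $p_1(\vec c_1,\ldots)p_1(\vec b_1,\ldots)$ are \emph{proportional}, with some ratio $\alpha_1\in\F_p$, and your ``carefully chosen subalgebra in which each orbit of reorderings collapses to a single line'' does not yield a commutative algebra to feed into Theorem \ref{thm:interpretationBa}. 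The missing device is to choose $m$ with $\alpha_1^m=1$ and set $q(\vec x)=p_1(\vec x_1)\cdots p_1(\vec x_m)$: the values of $q$ then genuinely commute, Cherlin's theorem gives a uniform $s$ with $[ap_1(\vec b_1)\cdots p_1(\vec b_s)]=0$, and one filters $L$ by the ideals $I_i=\Span([ap_1(\vec b_1)\cdots p_1(\vec b_i)])$ so that each section satisfies $[ap_1(\vec b)]=0$; the whole procedure must then be repeated inside each section for $p_2$, then $p_3$, and so on, before the bullet hypothesis applies. This iteration over $p_1,\ldots,p_s$ is the substance of the reduction and is absent from your sketch.

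Two further problems. In variant (I), your final step ``by characteristic simplicity $I_{s-1}=L$'' is invalid: you extended scalars just beforehand, and (as the paper notes explicitly in the proof of Theorem \ref{thm:4strongLAnilpotent}) $L\otimes_{\F_p}F$ need not remain characteristically simple. The repair is not to conclude that $L$ itself is $(n-1)$-Engel, but to observe that each section $I_j/I_{j+1}$ of your filtration is an $\omega$-categorical $n$-strong $(n-1)$-Engel toastie algebra (since $[I_j,y^{n-1}]\seq I_{j+1}$ and each $[ab_1^{n-1}\cdots b_j^{n-1}]$ is a toastie), apply the bullet to every section, and climb back up with Corollary \ref{cor:wilsontheorem}. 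Finally, your ``no nonzero toasties'' sub-case forces an induction on $n$ that requires the bullet hypothesis for all smaller Engel classes, which the statement of variant (I) does not obviously grant. The paper avoids this branch entirely: by Kostrikin's theorem an $n$-Engel Lie algebra of characteristic $p>n$ contains a nontrivial abelian ideal, so a characteristically simple one is automatically spanned by its toasties; relatedly, the opening ``reduction to $n$-strong'' is not a prior black box but is extracted from toastie-ness plus $\omega$-categoricity, and yields only $k$-strongness for some $k$.
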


\begin{proof}
We only give the proof assuming the variant (III) of Conjecture \ref{conj:toasties1}. The other case is similar.

Assume the premises. Let $L$ be an $\omega$-categorical $n$-Engel Lie algebra with $n<p$. Let 
\[L = I_0\supsetneq \ldots \supsetneq I_{s+1} = 0\] 
be a maximal series of characteristic ideals. Each quotient $I_i/I_{i+1}$ is characteristically simple and again $n$-Engel with $n<p$ so by Wilson's Theorem (Corollary \ref{cor:wilsontheorem}) we may assume that $L$ is characteristically simple. By a famous theorem of Kostrikin \cite{kostrikinAroundBurnside}, every $n$-Engel Lie algebra over a field of characteristic $p>n$ contains a nontrivial abelian ideal, hence $L$ contains a nonzero toastie. It follows that the span of all toasties is a nontrivial characteristic ideal hence $L$ is a toastie algebra. As observed above, by $\omega$-categoricity, $L$ is $k$-strong for some $k$. Using Subsection \ref{subsec:interpretation} and Lemma \ref{lm:scalarextensiontoastie} as in the proof of Theorem \ref{thm:4strongLAnilpotent}, we may also assume that $L$ is an $k$-strong $\omega$-categorical toastie algebra over a field of size $\geq n$ (at the cost of losing characteristic simplicity).

By hypotheses, there exist $p_1(x_1,\ldots,x_{k_1}),\ldots,p_s(x_1,\ldots,x_{k_s})$ monomials of multiweight $(k-1,\ldots,k-1)$ as in the hypotheses. Under Conjecture \ref{conj:toasties1} (III), there exists $\alpha_1\in \F_p$ such that 
\[
p_1(b_1,\ldots,b_{k_{1}})p_1(c_1,\ldots,c_{k_{1}}) = \alpha_1 p_1(c_1,\ldots,c_{k_{1}})p_1(b_1,\ldots,b_{k_{1}})
\]
Let $m\in \N$ be such that $\alpha_1^m = 1$. Let $\vec x_1,\ldots,\vec x_m$ be $m$ tuples of length $k_1$ and set $\vec x := (\vec x_1,\ldots,\vec x_m)$. We now define
\[q(\vec x) := p_1(\vec x_1) \cdots p_1(\vec x_{m})\]
Then $q(\vec b)\cdot p_1(c_1,\ldots,c_{k_1}) =  p_1(c_1,\ldots,c_{k_1})q(\vec b)$ holds for all $\vec b,c_i$ from $L$ so that for all $\vec b,\vec c\in L^{mk_1}$ we have 
\[q(\vec b)q(\vec c) = q(\vec c)q(\vec b).\]
This implies that the associative algebra $B\seq A(L)$ generated by $X = \set{q(\vec b)\mid \vec b\in L^{mk_1}}\seq A(L)$ is commutative. For any $a\in L$, using Theorem \ref{thm:interpretationBa} and Cherlin's Theorem (Fact \ref{fact:cherlinomegacatnilringarenilpotent}), we conclude that $B_a$ is nilpotent. It follows that there exists $t\in \N$ such that 
    \[[aq(\vec b_1)\cdots q(\vec b_t)] = 0\]
    for all $a,\vec b_1,\ldots,\vec b_t\in L^{mk_1}$. Then, for $s = tm$, we have  
    \[[ap_1(\vec b_1)\cdots p_1(\vec b_s)] = 0\]
    for all $\vec b_1,\ldots,\vec b_s\in L^{k_1}$.
    
    Consider $I_i = \Span([ap_1(\vec b_1)\cdots p_1(\vec b_i)]\mid a,\vec b_1,\ldots,\vec b_i\in L)$. By an easy generalization of Lemma \ref{lm:toastielinearspanengel} (see Remark \ref{rk:moretedious}), $I_1,\ldots,I_s$ are ideals of $L$. Note that each element of the form $[ap_1(\vec b_1)\cdots p_1(\vec b_i)]$ is a toastie in $L$, so that $I_i$ and $I_i/I_{i+1}$ are toastie algebras. The ideal generated by an elements $u$ of $I_i$ computed in $I_i$ also satisfy $I^k(u) = 0$ hence $I_i$ and $I_i/I_{i+1}$ are $k$-strong toastie algebras. Each quotient $I_i/I_{i+1}$ is $\omega$-categorical, $n$-strong toastie algebra and satisfies the identity
    \[[ap_1(\vec b)] = 0.\]

    Working now inside each quotient $L_i = I_i/I_{i+1}$, we may re-do the argument above to decompose $L_i$ in finitely many sections for which $p_2(\vec b) = 0$. We may iterate this procedure so that, using exhaustively Corollary \ref{cor:wilsontheorem}, we have reduced the question of nilpotency of $L$ to the nilpotency of $\omega$-categorical $n$-strong toastie algebras which satisfy $p_i(x_1,\ldots,x_{k_i}) = 0$, for all $i = 1,\ldots,s$. 
\end{proof}

\section{Appendix}
\subsection{A direct proof of Theorem \ref{thm:toastiesconjecture4}} We prove that $4$-strong Lie algebras satisfy the identity $a^3b^3+b^3a^3 = 0$. We already proved that $L$ satisfies the following set of short equations:

\begin{align*}
    a^3b+a^2ba+aba^2+ba^3 &= 0, \tag{S1}\\
    a^3b^2+a^2b^2a+ababa+ba^2ba &= 0, \tag{S2}\\
    a^2bab+a^2b^2a+ab^2a^2+baba^2 &=0, \tag{S3}\\
    aba^2b+ababa+ab^2a^2+b^2a^3 &=0, \tag{S4}\\
    a^2bab^2+a^2b^3a+ab^2aba+(ba)^3 &=0, \tag{S5}\\
    aba^2b^2+abab^2a+ab^2aba+b^2a^2ba &=0, \tag{S6}\\
    (ab)^3+abab^2a+ab^3a^2+b^2aba^2 &=0, \tag{S7}
\end{align*}
and the following long equations:
\begin{align*}
a^2b^2+b^2a^2+ab^2a+ba^2b+(ab)^2+(ba)^2 &= 0, \tag{L1}\\
a^3b^2+a^2bab+a^2b^2a+ba^3b+ba^2ba+baba^2 &=0, \tag{L2}\\
a^3b^2+aba^2b+ababa+ba^3b+ba^2ba+b^2a^3 &=0, \tag{L3}\\
a^2bab+aba^2b+ab^2a^2+ba^3b+baba^2+b^2a^3 &= 0, \tag{L4}\\
a^3b^3+a^2b^2ab+a^2b^3a+ba^2bab+ba^2b^2a+bab^2a^2 &= 0, \tag{L5}\\
a^3b^3+a^2bab^2+a^2b^3a+ba^3b^2+ba^2b^2a+(ba)^3 &= 0,  \tag{L6}\\
a^3b^3+aba^2b^2+abab^2a+ba^3b^2+ba^2b^2a+b^2a^2ba &= 0. \tag{L7}
\end{align*}

We start by listing a few homogeneous equations that follow from the above.
\begin{align*}
    a^3b^3+a^2b^2ab+(ab)^3+ba^2bab &= 0, \tag{S8}\\
    b^2a^3b+b^2a^2ba+b^2aba^2+b^3a^3 &= 0, \tag{S9}\\
    baba^2b+(ba)^3+bab^2a^2+b^3a^3 &= 0, \tag{S10}\\
    ba^3b^2+ba^2b^2a+(ba)^3+b^2a^2ba &= 0, \tag{S11}\\
    ba^3b^2+ba^2bab+baba^2b+b^2a^3b &= 0, \tag{S12}\\
    a^2bab^2+a^2b^2ab+ab^2a^2b+baba^2b &=0, \tag{S13}\\
    a^3b^3+a^2bab^2+aba^2b^2+ba^3b^2 &= 0, \tag{S14}\\
    aba^2b^2+ab^3a^2+abab^2a+ab^2a^2b+(ab)^3+ab^2aba &= 0, \tag{L8}\\
    a^2b^2ab+b^2a^3b+ab^2a^2b+ba^2bab+(ab)^3+baba^2b &= 0, \tag{L9}\\
    a^3b^3+ab^2a^2b+a^2b^2ab+aba^2b^2+a^2bab^2+(ab)^3& =0. \tag{L10}
\end{align*}
$(S8)$ is obtained by multiplying $(S2)$ on the right by $b$.
$(S9)$ is obtained by multiplying $(S1)$ on the left by $b^2$.
$(S10)$ is obtained by multiplying $(S4)$ by $b$ on the left.
$(S11)$ is obtained by multiplying $(S2)$ by $b$ on the left.
$(S12)$ is obtained by multiplying $(S1)$ by $b$ on the left and on the right.
$(S13)$ is obtained by multiplying $(S3)$ by $b$ on the right.
$(S14)$ is obtained by multiplying $(S1)$ by $b^2$ on the right.
$(L8)$ is obtained by multiplying $(L1)$ by $ab$ on the left.
$(L9)$ is obtained by multiplying $(L1)$ by $ab$ on the right.
$(L10)$ is obtained by multiplying $(L1)$ by $a$ on the left and $b$ on the right.

For a polynomial $p(a,b)$ we define the predicate
\[\Asym(p(a,b)) \iff p(a,b) = -p(b,a).\]
We also define \[\overline{p(a,b)} := p(b,a)\] and we notice the following elementary rules, for $p,q$ polynomials in $(a,b)$:
\begin{align*}
    &\Asym(p+q) \iff \Asym(p+\overline{q}), \tag{Rule1}\\
    &\Asym(p) \text{ and } p+q = 0 \implies \Asym(q). \tag{Rule2}
\end{align*}

The number of monomials of multiweight $(3,3)$ in $(a,b)$ is $6$ choose $3$ which is $20$. As $\Asym(p)$ is equivalent to $\Asym(\bar p)$, our final goal is to establish $\Asym(p)$ for $p$ that ranges in
\[\set{a^3b^3, (ab)^3, a^2b^3a, ab^3a^2, ab^2aba, aba^2b, a^2b^2ab, aba^2b^2, ab^2a^2b, a^2bab^2}.\]

We first establish that 
\[\Asym(a^3b^3+a^2b^2ab). \tag{$\star_1$}\]
Consider $(S5)$ with $a$ and $b$ exchanged and $(S8)$:
\begin{align*}
b^2aba^2+b^2a^3b+ba^2bab+(ab)^3 &=0,\\
    a^3b^3+a^2b^2ab+(ab)^3+ba^2bab &= 0.
\end{align*}
By substracting the two equations above, we obtain
\[a^3b^3+a^2b^2ab = b^2aba^2+b^2a^3b.\]
Replacing $b^2aba^2+b^2a^3b$ within equation $(S9)$ 
\begin{align*}
b^2a^3b+b^2a^2ba+b^2aba^2+b^3a^3 &= 0
\end{align*}
we conclude
\[a^3b^3+a^2b^2ab+b^3a^3+b^2a^2ba = 0\]
hence $(\star_1)$ holds.

We apply Rule2 with $(\star_1)$ and equation $(S8)$ to obtain $\Asym((ab)^3+ba^2bab)$ and by Rule1 we get
\[\Asym((ab)^3+ab^2aba). \tag{$\star_2$}\]
Using Rule1 we also have $\Asym((ba)^3+ab^2aba)$ hence using Rule2 with equation $(S5)$ we obtain 
\[\Asym(a^2b^3a+a^2bab^2).\tag{$\star_3$}\]
Consider now equations $(S10)$ and $(S8)$
\begin{align*}
    a^3b^3+a^2b^2ab+(ab)^3+ba^2bab &= 0 \tag{S8}\\
    baba^2b+(ba)^3+bab^2a^2+b^3a^3 &= 0 \tag{S10}
\end{align*}
and add them together:
\[\underbrace{a^3b^3+b^3a^3+a^2b^2ab}_{ = -b^2a^2ba\text{ by }(\star_1)}+\underbrace{(ab)^3+(ba)^3+ba^2bab}_{ = -ab^2aba \text{ by } (\star_2)}+baba^2b+bab^2a^2 = 0.\]
We conclude 
\[baba^2b+bab^2a^2 = ab^2aba+b^2a^2ba.\]
Replacing $ab^2aba+b^2a^2ba$ in $(S6)$, we conclude:
\[\Asym(aba^2b^2+abab^2a).\tag{$\star_4$}\]
Using Rule2 with $(S6)$ and $(\star_4)$ we obtain $\Asym(ab^2aba+b^2a^2ba)$ and with Rule1 we get
\[\Asym(a^2b^2ab+ab^2aba).\tag{$\star_5$}\]
Using Rule2 with $(S8)$ and $(\star_5)$ we conclude
\[\Asym(a^3b^3+(ab)^3)\tag{$\star_6$}.\]

Equation $(S6)$ yields
\[aba^2b^2+abab^2a+ab^2aba = -b^2a^2ba.\]
Replacing the term $aba^2b^2+abab^2a+ab^2aba$ in equation $(L8)$:
\[aba^2b^2+ab^3a^2+abab^2a+ab^2a^2b+(ab)^3+ab^2aba = 0,\]
we obtain 
\[ab^3a^2+ab^2a^2b+(ab)^3 = b^2a^2ba.\]
Exchanging $a$ and $b$ in equations $(S11)$ yields
\[\underbrace{ab^3a^2+ab^2a^2b+(ab)^3}_{ = b^2a^2ba}+a^2b^2ab = 0\]
whence
\[\Asym(a^2b^2ab).\tag{$\pumpkin_1$}\]
Using Rule2 with $(\pumpkin_1)$ and $(\star_1)$, we get 
\[\Asym(a^3b^3).\tag{$\pumpkin_2$}\]
Again, Rule2 with $(\pumpkin_2)$ and $(\star_6)$ yields 
\[\Asym((ab)^3)\tag{$\pumpkin_3$}\]
which, together with $(\star_2)$ yields
\[\Asym(ab^2aba). \tag{$\pumpkin_4$}\]
Using again equation $(S11)$ exchanging $a$ and $b$, we obtain 
\[ab^2a^2b+(ab)^3+a^2b^2ab = -ab^3a^2.\]
Replacing the term $ab^2a^2b+(ab)^3+a^2b^2ab$ in $(L9)$ gives 
\[ab^3a^2 = b^2a^3b+ba^2bab+baba^2b\]
Replacing $b^2a^3b+ba^2bab+baba^2b$ by $ab^3a^2$ in $(S12)$:
\[ba^3b^2+ba^2bab+baba^2b+b^2a^3b = 0\]
yields
\[\Asym(ab^3a^2). \tag{$\pumpkin_5$}\]
Clearly, $(\pumpkin_1), (\pumpkin_3)$ and $ (\pumpkin_5)$ gives $\Asym(b^2a^2ba+(ba)^3+ba^3b^2)$ hence by Rule2 with $(S11)$ we obtain:
\[\Asym(ab^2a^2b). \tag{$\pumpkin_6$}\]
Considering equation $(S10)$ with $a$ and $b$ exchanged, we get 
\[a^3b^3+aba^2b^2+(ab)^3 = -abab^2a.\]
Replacing $a^3b^3+aba^2b^2+(ab)^3$ by $-abab^2a$ in $(L10)$:
\[
a^3b^3+ab^2a^2b+a^2b^2ab+aba^2b^2+a^2bab^2+(ab)^3=0,
\]
gives 
\[abab^2a = ab^2a^2b+a^2b^2ab+a^2bab^2.\]
Now replacing $ab^2a^2b+a^2b^2ab+a^2bab^2$ by $abab^2a$ in $(S13)$ 
\[a^2bab^2+a^2b^2ab+ab^2a^2b+baba^2b = 0\]
yields 
\[\Asym(abab^2a). \tag{$\pumpkin_7$}\]
Applying Rule2 with $(\pumpkin_7)$ and $(\star_4)$ yields
\[\Asym(aba^2b^2). \tag{$\pumpkin_8$}\]
By $(\pumpkin_2), (\pumpkin_5)$ and $(\pumpkin_8)$, we have $\Asym(a^3b^3+aba^2b^2+ba^3b^2)$, hence by Rule2 with equation $(S14)$:
\[a^3b^3+a^2bab^2+aba^2b^2+ba^3b^2 = 0,\]
we obtain 
\[\Asym(a^2bab^2). \tag{$\pumpkin_9$}\]
A last use of Rule2 with $(\pumpkin_9)$ and $(\star_3)$ yields
\[\Asym(a^2b^3a). \tag{$\pumpkin_{10}$}\]
Observe that, in the end, we only used equations $(S1)-(S6)$ and $(L1)$.

\subsection{Remarks for $5$-strong Lie algebras, and towards a generalization of the argument} For $5$-strong Lie algebras, a direct proof of Theorem \ref{thm:toastiesconjecture5} is possible via the same strategy, although it is much more tedious. Nonetheless, an important subtlety should be pointed out. The proof proceeds as follows: first, one should consider the full set of homogenizations of all the equations from Theorem \ref{thm:toastiesconjecture5} to sums of monomials of weight $(4,4)$. This gives tens of equations. Then, one should consider an ``operator of symmetry" under which the set of equations is symmetric, and which allows to find rules such as in Rule1 and Rule2. In the case of $4$-strong Lie algebras, the operator is clearly $p\mapsto -\overline{p}$. It appears that this choice of operator of symmetry is not pertinent in the case of $5$-strong Lie algebras. In fact, the operator which is meaningful in that case is the ``mirror" operator, which gives to a monomial its mirror: $a^3b^4a \mapsto ab^4a^3$, $aba^2b^3a\mapsto ab^3a^2ba$, $a^3b^3ab\mapsto bab^3a^3$ etc. It turns out that the set of homogeneous equations is closed under the mirror operator, and a similar argument as above can be carried out.

Consider the ring $R = \F_p[X]/\vect{X^2-1} = \F_p[\sigma]$, where $\sigma^2 = 1$. It is easy to formalize the problem above as a question in linear algebra over the ring $R$: we are given a partition $S = S_0\cup S_1$ of the set of all homogeneous monomials of weight $(k-1,k-1)$ and a symmetric operator $\sigma: S\to S$ which maps $S_0$ to $S_1$ and $S_1$ to $S_0$, with $\sigma^2 = \Id$. The set of homogeneous equations gives a matrix $M$ with entries in $\set{0,1,\sigma}$ such that if $\vec v$ is a vector consisting of all elements of $S_0$, the system of homogeneous equations is equivalent to $M\vec v = 0$. The condition $I^k(x) = 0$ should imply that this matrix has enough rows to ensure a transformation of the matrix into one of the form 
\[
\begin{bmatrix} 
    1-\sigma & * & \dots  & *\\
    0 & 1-\sigma & &\\
    \vdots & 0 &\ddots & \vdots\\
    0 & \dots & & 1-\sigma 
\end{bmatrix}
\]
which would give a proof of the conjecture.

\bibliographystyle{plain}
\bibliography{biblio.bib}{}

\end{document}